\def\M{{\mathcal{M}}}
\def\H{{\mathcal{H}}}
\def\oM{{\overline{\mathcal{M}}}}
\def\CP{{{\mathbb C}{\rm P}}}
\def\Z{{\mathbb Z}}
\def\Q{{\mathbb Q}}
\def\d{{\partial}}
\def\o{\overline}
\newtheorem{proposition}{Proposition}[section]
\newtheorem{theorem}[proposition]{Theorem}
\newtheorem{lemma}[proposition]{Lemma}
\theoremstyle{definition}
\newtheorem{remark}[proposition]{Remark}
\title[A new proof of Faber's intersection number conjecture]
{A new proof of Faber's intersection number conjecture}
\thanks{A.~B is partially supported by the grants RFBR-07-01-00593, NSh-709.2008.1. Both A.~B. and S.~S. are partly supported by the Vidi grant of NWO}
\author{A.~Buryak}
\address{A.~Buryak:\newline
Department of Mathematics,
University of Amsterdam, \newline
P.~O.~Box 94248, 1090 GE Amsterdam, 
The Netherlands\newline 
\indent and\newline
Department of Mathematics, Moscow State University,\newline
Leninskie gory, 19992 GSP-2 Moscow, Russia}
\email{a.y.buryak@uva.nl, buryaksh@mail.ru}
\author{S.~Shadrin}
\address{S.~Shadrin:\newline
Department of Mathematics,
University of Amsterdam, \newline
P.~O.~Box 94248, 1090 GE Amsterdam, 
The Netherlands\newline 
\indent and\newline
Department of Mathematics, Institute of System Research,\newline
Nakhimovsky prospekt 36-1, Moscow 117218, Russia}
\email{s.shadrin@uva.nl, shadrin@mccme.ru}
\begin{document}

\begin{abstract} We give a new proof of Faber's intersection number conjecture concerning the top intersections in the tautological ring of the moduli space of curves $\M_g$. The proof is based on a very straightforward geometric and combinatorial computation with double ramification cycles. 
\end{abstract}

\maketitle

\tableofcontents

\section{Introduction}

\subsection{Notations}
Let $\M_{g,n}$ be the moduli space of complex algebraic curves of genus $g$ with $n$ labelled marked points. We denote by $\oM_{g,n}$ the space of stable curves which is the Deligne-Mumford compactification of $\M_{g,n}$, and by $\M^{rt}_{g,n}\subset \oM_{g,n}$ the partial compactification of $\M_{g,n}$ by stable nodal curves with rational tails (that is, one irreducible component of a stable curve must still have geometric genus $g$).

Thorughout the paper we work with tautological classes on these spaces. The tautological ring $R^*(\oM_{g,n})$ can be defined as the minimal system of subalgebras of $A^*(\oM_{g,n})$ that contains the classes $\psi_1,\dots,\psi_n$ and is closed under pushforwards with natural maps between moduli spaces. The tautological classes on $\M_{g,n}^{rt}$ are defined as restrictions of the tautological classes on $\oM_{g,n}$.

For further definitions and a detailed discussion of the tautological ring and related topics in geometry of the moduli space of curves we refer the reader to~\cite{Vak}, which is a good survey on the subject.

\subsection{Faber's conjecture}
The conjecture of C.~Faber~\cite{Fab} describes the structure of the tautological ring $R^*(\M_g)$, $g\geq 2$ ($\M_g=\M_{g,0}$). Let us mention the key ingredients of this conjecture.

\begin{enumerate}
\item \emph{(Vanishing)} For any $i\geq g-1$, $R^i(\M_g)=0$.

\item \emph{(Socle)} $R^{g-2}(\M_g) \cong \Q$.

\item \emph{(Perfect pairing)} For any $0\leq i\leq g-2$, the cup product 
$$
R^i(\M_g)\times R^{g-2-i}(\M_g)\to R^{g-2}(\M_g)
$$ 
is a perfect pairing. 

\item \emph{(Top intersections)} Let $\pi\colon \M_{g,n}^{rt}\to\M_g$ be the forgetful morphism. Assume $d_1+\cdots+d_n=g+n-2$, $d_i\geq 1$, $i=1,\dots,n$. Then the class   
$$
\pi_*\left(\prod_{i=1}^n\psi_i^{d_i}(2d_i-1)!!\right)\in R^{g-2}(\M_g)
$$ 
does not depend on $d_1,\dots,d_n$.
\end{enumerate}

The vanishing and socle properties are proven in several different ways, see~\cite{Fab,Loo,GouJacVak}. The perfect pairing is still an open question. The top intersections property, also known as Faber's intersection number conjecture, we discuss in the next Section.

\subsection{Top intersections}

Faber~\cite{Fab} observed that the class $\lambda_g\lambda_{g-1}$ is equal to zero on $\oM_{g,n}\setminus \M_{g,n}^{rt}$, $n\geq 0$. Moreover, the 
linear functional $\int\cdot\lambda_g\lambda_{g-1}\colon R^{g-2}(\M_g)\to\Q$  is an isomorphism. Therefore, a reformulation of the Faber's intersection number conjecture states that
$$
\int_{\oM_{g,n}}\prod_{i=1}^n\psi_i^{d_i}\lambda_g\lambda_{g-1}=\frac{(2g-3+n)!(2g-3)!!}{(2g-2)!\prod_{i=1}^n (2d_i-1)!!}
\int_{\oM_{g,1}}\psi_1^{g-1}\lambda_g\lambda_{g-1}
$$
In this form it is already proved in two different ways that we would like to discuss here.

First proof is based on an observation of Getzler and Pandharipande~\cite{GetPan}. The $\lambda_g\lambda_{g-1}$-integrals appear in the Gromov-Witten theory of $\CP^2$, and the degree zero Virasoro constrains imply Faber's intersection number conjecture. The Virasoro constrains for the Gromov-Witten potential of $\CP^2$ were proved later on by Givental, see~\cite{Giv}.

Second proof is due to Liu and Xu~\cite{LiuXu} via very skillful combinatorial computations. Mumford's formula~\cite{Mum} expresses $\lambda$-classes in terms of $\psi$-, $\kappa$-, and boundary classes. Therefore, the whole problem is reduced to a computation of some non-trivial combinations of the integrals of $\psi$-classes. Witten's conjecture~\cite{Wit} (proved by now in several different ways) allows to compute all integrals of $\psi$-classes using string, dilaton, and KdV equations.

There is a third approach to the same problem due to Goulden, Jackson, and Vakil. They apply relative to infinity localization to the moduli space of mappings to $\CP^1$ in order to obtain relations that involve more general so-called Faber-Hurwitz classes and double Hurwitz numbers in genus $0$. This set of relations allows, in principle, to resolve Faber's intersection number conjecture completely, but there are combinatorial difficulties that they have managed to overcome only for a small number of points. 

We give a new proof of Faber's intersection number conjecture. There are at least two reasons to do that. First, two existing proofs mentioned above involve too advanced technique and, second, they do not provide any geometric feeling for the structure of the tautological ring of $\M_g$. Meanwhile, the approach of Goulden, Jackson, and Vakil allows to understand much more from the low-level geometry of $\M_g$, but it is not a complete proof of the conjecture. Our approach is somewhat similar to the main idea of Goulden, Jackson, and Vakil, but all computations have appeared to be much simpler.

\subsection{Double ramification cycles}

A particular type of double ramification cycles that we need in this paper can be described in the following way. Let $a_1,\dots,a_n$, $n\geq 1$ and $b_1,\dots,b_k$, $0\leq k\leq g$, be positive integers. A subvariety $\H(a_1,\dots,a_n,b_1,\dots,b_k)\subset \M_{g,n+k+1}$ consists of curves $(C_g,x_0,x_1,\dots,x_n,y_1,\dots,y_k)$ such that $-(\sum_{i=1}^n a_i+\sum_{i=1}^kb_i)x_0+\sum_{i=1}^n a_ix_i+\sum_{i=1}^kb_iy_i$ is a principle divisor.
Let $\pi\colon\oM_{g,n+k+1}\to\oM_{g,n+1}$ be the map that forgets the points $y_1,\dots,y_k$. We denote by 
$$
DR_g\left(\prod_{i=1}^n m_{a_i}\prod_{i=1}^k \tilde{m}_{b_i}\right)
$$ 
the push-forward $\pi_*[\H(a_1,\dots,a_n,b_1,\dots,b_k)]$ of the class of the closure of $\H(a_1,\dots,a_n,b_1,\dots,b_k)$ in $\oM_{g,1+n+k}$. Sometimes it is more convenient to consider the restriction of the Po\-in\-ca\-r\'e dual of the class $DR_g(\prod_{i=1}^n m_{a_i}\prod_{i=1}^k \tilde{m}_{b_i})$ to $\M^{rt}_{g,1+n}$; abusing notations we denote it by the same symbol. It is proved in~\cite{ShaZvo} (a generalization of the argument in~\cite{Mum}), that $DR_g(\prod_{i=1}^n m_{a_i}\prod_{i=1}^k \tilde{m}_{b_i})$ has codimension $g-k$.

An advantage of the double ramification cycles is that any tautological class can be expressed in terms of them~\cite{Ion} and there is a simple expression for a $\psi$-class restricted to a DR-cycle in terms of DR-cycles of higher codimension. All DR-cycles lie in the tautological ring~\cite{FabPan}.

The main idea of our approach to Faber's intersection number conjecture can be described in the following way. The fundamental class of the moduli space of curves of genus $g$ can be represented by a DR-class with $k=g$. Then any integral of $\psi$-classes over this cycle can be expressed in terms of integrals over DR-classes with $k=0$ via the same argument as in the standard proof of the string equation. A lemma of E.~Ionel~\cite{Ion} allows to find an expression for any monomial of $\psi$-classes (in $\M_{g,1+n}^{rt}$, $n\geq 1$) in terms of DR-cycles with $n=1$ and $k=0$, that is, $DR_g(m_a)$, $a\geq 2$. This classes are in the socle of the tautological ring of $\M_{g,2}^{rt}$, they are proportional to one particular class $DR_g(m_2)$ which is the hyperelliptic locus generating $R^{g}(\M_{g,2}^{rt})$. 

This gives a combinatorial algorithm to compute explicitely any class involved in Faber's conjecture. A relatively simple and straightforward analysis of this algorithm gives a new prove of Faber's intersection number conjecture.

We hope that the technique of DR-cycles presented here can help with the rest of Faber's conjecture, that is, with the prefect pairing, which is still the most misterious part of it. 

\subsection{Organization of the paper}
We split the argument into geometric (section 2) and combinatorial (sections~3 and~4) parts. In fact, the new ideas in this paper are only in combinatorial computation, while all geometric arguments are a sort of standard routine computations using the space of admissible covers or universal Jacobian. This sort of arguments is rather standard, so we decided to de-emphasize geometric part and we provide only sketches of the proofs there. 
Let us also mention here that all statements in section~3 have a strong geometric flavour in the sense that there are some incomplete geometric arguments that could replace straightforward combinatorial proofs there.

\subsection{Acknowledgements}

The authors are very grateful to I.~Goulden, D.~Jackson, M.~Kazarian, B.~Moonen, R.~Vakil, and D.~Zvonkine for the plenty of fruitful discussions. 

\section{Integrals over DR-cycles}

The goal of this section is to give an algorithm to compute an integral 
$\int_{\oM_{g,n+1}}\lambda_g\lambda_{g-1}\psi_0^0\prod_{i=1}^n\psi_i^{d_i}$ for any non-negative integers 
$d_1,\dots,d_n$ such that $\sum_{i=1}^n d_i=g+n-1$. It is not exactly the integrals we need for Faber's conjecture, however,
 there is an argument of Witten in~\cite{Wit} that explains how to use the string equation in order to recover the integrals 
with arbitrary (positive) powers of $\psi$-classes from these particular ones.

There are two different languages. One can either dicuss the integrals of $\lambda_g\lambda_{g-1}\psi_0^0\prod_{i=1}^n\psi_i^{d_i}$ over DR-cycles in $\oM_{g,1+n}$ (which is usually more convenient for particular computations), or we can say the same for the intersections of $\psi_0^0\prod_{i=1}^n\psi_i^{d_i}$ with the resrtictions of the Poincar\'e duals of DR-cycles to $R^*(\M^{rt}_{g,1+n})$ (which is more convenient for geometric arguments).

We introduce a new notation. Let $d_1,\dots,d_n$, $n\geq 1$, be non-negative integers such that $\sum_{i=1}^n d_i = n-1$. Let $a_1,\dots,a_n$ be arbitrary positive integers. Let 
$$
\left\langle
\prod_{i=1}^n \genfrac[]{0pt}{}{ a_i}{d_i}
\right\rangle^{DR}_g:=\int_{DR_g(\prod_{i=1}^n{m_{a_i}})}\lambda_g\lambda_{g-1}\psi_0^0\prod_{i=1}^n\psi_i^{d_i}.
$$

\subsection{Reduction to initial DR-cycles}
The initial DR-cycles are the cycles with no $\tilde{m}$-s in the notations of the previous section. There is  
a simple reduction formula for $\psi$-classes on the initial DR-cycles that we discuss in the next section. The goal of this section is to express any product of $\psi$-classes in $R^{g+n-1}(\M^{rt}_{g,1+n})$ in terms of the products of $\psi$-classes and initial DR-cycles.

There are two first observations that we are going to use. 

\begin{lemma}\label{lemma:b-2} In $R^0(\M^{rt}_{g,1+n})$ we have: 
$$
DR_g\left(\prod_{i=1}^n m_{a_i}\prod_{i=1}^g \tilde{m}_{b_i}\right)=g!\prod_{i=1}^g b_i^2[\oM_{g,1+n}].
$$
\end{lemma}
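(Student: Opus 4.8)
The plan is to identify the cycle with a multiple of the fundamental class and to evaluate the multiple as the degree of a forgetful map. Since $DR_g\left(\prod_{i=1}^n m_{a_i}\prod_{i=1}^g\tilde m_{b_i}\right)$ has $k=g$, it has codimension $g-k=0$, hence equals some rational constant $c$ times $[\oM_{g,1+n}]$; the entire content is to compute $c$. By definition this class is $\pi_*[\H]$ for $\pi\colon\oM_{g,n+g+1}\to\oM_{g,n+1}$ forgetting $y_1,\dots,y_g$, and a dimension count gives $\dim\H=3g-2+n=\dim\oM_{g,n+1}$, so $\pi|_{\H}$ is generically finite and $c$ is precisely its degree. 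I would compute this degree on the open dense locus $\M_{g,n+1}$, fibrewise.

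Fix a generic pointed curve $(C,x_0,x_1,\dots,x_n)$. The fibre of $\pi|_{\H}$ over it is the set of $(y_1,\dots,y_g)\in C^g$ for which $\sum_i a_i(x_i-x_0)+\sum_i b_i(y_i-x_0)$ is principal. Writing $A=\sum_i a_i(x_i-x_0)\in\mathrm{Jac}(C)$ and $\mathrm{aj}(y)=[y-x_0]$, this fibre is $\mu^{-1}(-A)$ for the map $\mu\colon C^g\to\mathrm{Jac}(C)$, $\mu(y_1,\dots,y_g)=\sum_i b_i\cdot\mathrm{aj}(y_i)$, where $b_i\cdot$ is multiplication by $b_i$ in the Jacobian. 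For generic input $-A$ is a regular value, and because $\mu$ is holomorphic its topological degree equals the fibre cardinality with no sign issues; note also that the degree of $\mu$ is independent of the target, which already explains why the answer cannot involve the $a_i$, in agreement with the statement. Thus $c=\deg\mu$.

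The degree I would extract from the theta divisor. Writing $\theta\in H^2(\mathrm{Jac}(C))$ for the principal polarization one has $\int_{\mathrm{Jac}(C)}\theta^g=g!$, so $\deg\mu=\frac1{g!}\int_{C^g}(\mu^*\theta)^g$. Choosing a symplectic basis $e_1,f_1,\dots,e_g,f_g$ of $H^1(C)$ and using that $\mathrm{aj}^*$ is an isomorphism on $H^1$ while multiplication by $b_i$ acts there as $b_i$, I get $\mu^*\theta=\sum_{j=1}^g E_jF_j$ with $E_j=\sum_i b_i\,p_i^*e_j$, $F_j=\sum_i b_i\,p_i^*f_j$, the $p_i\colon C^g\to C$ being the projections. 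Expanding $(\mu^*\theta)^g$, every term with a repeated index $j$ vanishes because $E_j^2=F_j^2=0$, so only the $g!$ permutation terms survive and $(\mu^*\theta)^g=g!\prod_j E_jF_j$; integrating $\prod_j E_jF_j$ over $C^g$ forces a perfect matching of the $e$'s and $f$'s onto the $g$ factors, each matching contributing $\prod_i b_i^2$, for a total of $g!\prod_i b_i^2$. Combining, $\deg\mu=\frac1{g!}\cdot g!\cdot g!\prod_i b_i^2=g!\prod_i b_i^2$, the asserted constant.

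The main obstacle is not the algebra but the geometric bookkeeping: one must verify that $\pi|_{\overline\H}$ is genuinely generically finite of this degree, i.e. that the generic fibre is reduced, supported in $\M_{g,n+1}$, and receives no excess contribution from the boundary or from taking closures, and one must confirm that all signs in the symplectic expansion are positive. The cleanest way to dispatch both is to phrase the entire count relatively, over the universal Jacobian $\mathcal J\to\M_{g,n+1}$: the locus $\H$ is the pullback of the zero section under the relative Abel--Jacobi map from the relative configuration space, and the same $\theta^g$-computation performed fibrewise yields the degree directly, positivity being automatic from holomorphicity. An equivalent argument through the space of admissible covers, counting the meromorphic functions realizing the required principal divisor, reaches the same constant $g!\prod_{i=1}^g b_i^2$.
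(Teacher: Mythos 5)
Your proposal is correct and follows essentially the same route as the paper: reduce to the statement that the map $C^g\to \mathrm{Jac}(C)$, $(y_1,\dots,y_g)\mapsto \sum_i b_i(y_i-x_0)$, has degree $g!\prod_{i=1}^g b_i^2$, phrased via the universal Jacobian over $\M^{rt}_{g,1+n}$. Your $\theta^g$-computation supplies the details of the degree count that the paper merely asserts (note the paper's sketch contains a typo, writing $b_i^g$ where $b_i^2$ is meant), so it is a welcome elaboration rather than a different argument.
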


\begin{lemma}\label{lemma:proj} Let $\pi\colon \M^{rt}_{1+n}\to\M^{rt}_{1+(n-1)}$ be the map that forgets the last marked point. Assume $k\leq g-1$. Then 
$$
\pi_*DR_g\left(\prod_{i=1}^n m_{a_i}\prod_{i=1}^k \tilde{m}_{b_i}\right)=DR_g\left(\prod_{i=1}^{n-1} m_{a_i}\tilde{m}_{a_n}\prod_{i=1}^k \tilde{m}_{b_i}\right).
$$
\end{lemma}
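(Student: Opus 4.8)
The plan is to realize both sides as proper pushforwards of one and the same cycle and to conclude by functoriality of pushforward. Unwinding the definition from Section~1, the left-hand side is
$$
DR_g\Big(\prod_{i=1}^n m_{a_i}\prod_{i=1}^k \tilde{m}_{b_i}\Big)=(\pi_k)_*[\o\H],
$$
where $\o\H$ is the closure in $\oM_{g,n+k+1}$ of $\H(a_1,\dots,a_n,b_1,\dots,b_k)$, whose points carry the marking $(x_0,x_1,\dots,x_n,y_1,\dots,y_k)$, and $\pi_k\colon\oM_{g,n+k+1}\to\oM_{g,n+1}$ forgets $y_1,\dots,y_k$. Applying the forgetful map $\pi$ that drops $x_n$ and using $\pi_*(\pi_k)_*=(\pi\circ\pi_k)_*$, the left-hand side equals $(\pi\circ\pi_k)_*[\o\H]$, where the composite $\pi\circ\pi_k\colon\oM_{g,n+k+1}\to\oM_{g,n}$ forgets the points $y_1,\dots,y_k$ together with $x_n$; since forgetting a set of marked points does not depend on the order, this composite is unambiguous.

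The key observation is that relabeling $x_n$ as a forgotten point, say $y_0$ of weight $a_n$, is an isomorphism of moduli spaces that carries $\H(a_1,\dots,a_n,b_1,\dots,b_k)$ to the locus defining $DR_g(\prod_{i=1}^{n-1}m_{a_i}\tilde{m}_{a_n}\prod_{i=1}^k\tilde{m}_{b_i})$: the principal-divisor condition is literally unchanged, as the term $a_n x_n$ simply becomes $a_n y_0$. Under this identification the composite $\pi\circ\pi_k$ becomes precisely the forgetful map defining the right-hand side, namely the map forgetting all of $y_0,y_1,\dots,y_k$. Hence $(\pi\circ\pi_k)_*[\o\H]$ is exactly the right-hand side, which proves the identity at the level of the full DR-cycles in $\oM_{g,n}$, and restricting to $\M^{rt}_{g,1+(n-1)}$ yields the lemma. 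The hypothesis $k\le g-1$ is used precisely here: it guarantees that the right-hand side has at most $g$ forgotten points and is therefore a well-defined DR-cycle, of the expected codimension $g-k-1$, one less than the codimension $g-k$ of the left-hand side.

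The step I expect to require the most care is justifying the passage between $\oM$ and $\M^{rt}$: one must verify that the class being pushed forward is genuinely $[\o\H]$, and that restriction to the rational-tails locus commutes with the pushforward. The latter follows from the fact that the forgetful morphism satisfies $\pi^{-1}(\M^{rt})=\M^{rt}$ — forgetting a marked point neither creates nor destroys a component of geometric genus $g$ — so that restriction and proper pushforward commute. These are standard properties of forgetful morphisms, so I would state them and invoke the general structure of the moduli spaces rather than reprove them, in keeping with the sketch-level treatment of the geometric section.
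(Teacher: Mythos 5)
Your proof is correct and matches the paper's treatment: the paper dismisses this lemma with ``follows immediately from the definitions,'' and your argument is exactly that definitional unwinding (functoriality of pushforward plus the observation that relabeling $x_n$ as a forgotten point of weight $a_n$ leaves the defining locus, and hence its closure, unchanged). The side remarks on the codimension count and on why $k\le g-1$ is needed are also consistent with the paper's conventions.
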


\begin{proof}[Sketch of proofs]
The first lemma is almost obvious, since the corresponding DR-cycle can be defined via an intersection in the universal Jacobian over $\M^{rt}_{g,1+n}$. Then the lemma follows from the fact that for any curve $C$ of genus $g$ with a chosen base point $x_0$ the map $C^g\to Jac(C)$, $(y_1,\dots,y_g)\mapsto \sum_{i=1}^g b_iy_i$, is of degree $g!\prod_{i=1}^g b_i^g$.
The second lemma follows immediately from the definitions.
\end{proof}

This two lemmas allow to express a monomial of $\psi$-classes in terms of intersections with the initial DR-cycles. 

\begin{proposition}\label{prop:binomial} Let $d_1,\dots,d_n$ be positive integers such that $\sum_{i=1}^n d_i = g+n-1$. For any positive integers $a_1,\dots,a_n$ and $b_1,\dots,b_g$, we have the following identity: 
\begin{align}\label{eq:binomial}
& \left(g! \prod_{i=1}^g b_i^2\right)\cdot \int_{\oM_{g,n+1}}\lambda_g\lambda_{g-1}\psi_0^0 \prod_{i=1}^n \psi_i^{d_i} \\
& = \sum_{\begin{smallmatrix}
I_0\sqcup\cdots \sqcup I_n \\
=\{1,\dots,g\}
\end{smallmatrix}}(-1)^{g-|I_0|}
\left\langle
\prod_{i=1}^n \genfrac[]{0pt}{}{ a_i +\sum_{j\in I_i} b_j}{d_i-|I_i|}
\prod_{i\in I_0} \genfrac[]{0pt}{}{b_i }{ 0}\right\rangle^{DR}_g\notag
\end{align}
\end{proposition}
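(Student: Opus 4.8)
The plan is to convert the left-hand side into an integral against a double ramification cycle and then to run the computation modelled on the standard proof of the string equation, as advertised in the introduction. First I apply Lemma~\ref{lemma:b-2} to replace the number $g!\prod_{i=1}^g b_i^2$ by the codimension-zero cycle $DR_g(\prod_{i=1}^n m_{a_i}\prod_{j=1}^g \t m_{b_j})$; the left-hand side then becomes the integral over $\oM_{g,n+1}$ of $\lambda_g\lambda_{g-1}\psi_0^0\prod_{i=1}^n\psi_i^{d_i}$ against this cycle. By definition the cycle equals $\pi_*[\o{\H}(a_1,\dots,a_n,b_1,\dots,b_g)]$, where $\pi\colon\oM_{g,1+n+g}\to\oM_{g,1+n}$ forgets $y_1,\dots,y_g$ and $[\o{\H}(a_1,\dots,a_n,b_1,\dots,b_g)]$ is the initial DR-cycle with all points present. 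Since $\lambda_g$ and $\lambda_{g-1}$ are pulled back from $\M_g$ and hence $\pi$-invariant, the projection formula lifts the whole integral to $\oM_{g,1+n+g}$, where the integrand is the initial DR-cycle times $\lambda_g\lambda_{g-1}\cdot\pi^*\prod_{i=1}^n\psi_i^{d_i}$.

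The corrections that eventually build the right-hand side appear when I rewrite the pulled-back $\psi$-classes in terms of the intrinsic $\psi$-classes of $\oM_{g,1+n+g}$. Using the comparison formula for a forgetful map, $\pi^*\psi_i=\psi_i-\sum_{\emptyset\neq S}D_{i,S}$, where $D_{i,S}$ is the boundary divisor along which $x_i$ together with exactly the points $\{y_j\}_{j\in S}$ bubble off on a rational tail, I expand $\prod_{i=1}^n(\psi_i-\sum_{S}D_{i,S})^{d_i}$. Here the factor $\lambda_g\lambda_{g-1}$ is decisive: it vanishes off the rational-tails locus, so every surviving contribution keeps the genus-$g$ component intact and only pushes the colliding points onto rational bubbles, which is exactly the setting in which the string-equation identities ($\psi_i$ restricts to zero on a three-pointed bubble, together with the self-intersection relations among the $D_{i,S}$) are available. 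On such a bubble the double ramification condition forces the multiplicities to add at the node, so restricting the initial DR-cycle to $D_{i,\{j\}}$ absorbs $y_j$ into $x_i$, raising the multiplicity of $x_i$ from $a_i$ to $a_i+b_j$ and leaving the remaining markings present; iterating, the points collected in a set $I_i$ are absorbed into $x_i$ with total multiplicity $a_i+\sum_{j\in I_i}b_j$.

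The combinatorial core is to record which $y_j$ is absorbed into which $x_i$. Because the points $y_1,\dots,y_g$ are \emph{labelled} and their collisions are mutually independent, the expansion reorganizes as a sum over functions $\{1,\dots,g\}\to\{0,1,\dots,n\}$, equivalently over ordered partitions $\{1,\dots,g\}=I_0\sqcup I_1\sqcup\cdots\sqcup I_n$: for $i\geq 1$ the points $\{y_j\}_{j\in I_i}$ are absorbed into $x_i$, each lowering the exponent $d_i$ by one (producing $d_i-|I_i|$) and contributing $b_j$ to its multiplicity (producing $a_i+\sum_{j\in I_i}b_j$), while the points $\{y_j\}_{j\in I_0}$ survive as separate markings of multiplicity $b_j$ carrying $\psi^0$. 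Note that a point $y_j$ can never collide with another $y_{j'}$, since that would require lowering the exponent $\psi_{y_{j'}}^0$ below zero; this is precisely why the only datum is the partition into $I_0,I_1,\dots,I_n$. Each individual absorption supplies a single factor $-1$ from the comparison formula, which assembles into the global sign $(-1)^{g-|I_0|}$, and the terms with $|I_i|>d_i$ vanish automatically. Recognising each surviving term as the bracket $\langle\,\cdot\,\rangle^{DR}_g$ of an initial DR-cycle (on the intermediate space carrying the $|I_0|$ surviving markings) yields exactly the right-hand side of~\eqref{eq:binomial}.

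The step I expect to be the main obstacle is making this last reorganization rigorous, that is, proving that every ordered partition occurs with coefficient exactly $+1$ with no stray binomial factors, even when several of the $y_j$ are absorbed into one and the same $x_i$. I would handle this by carrying out the comparison one point at a time: forgetting $y_1,\dots,y_g$ successively and using Lemma~\ref{lemma:proj} after each step to keep the geometric object an honest initial DR-cycle, so that each single step reduces to the classical one-point string equation and contributes an independent choice for the point being removed. The independence and labelledness of the $y_j$ then force coefficient one for repeated absorptions, and the sum over ordered partitions is assembled inductively; what remains is the routine verification that the restrictions of $\lambda_g\lambda_{g-1}$ and of the initial DR-cycle to the successive collision divisors behave as claimed, which is the same kind of admissible-covers or universal-Jacobian computation that is only sketched in Section~2.
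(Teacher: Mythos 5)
Your proposal is correct and follows essentially the same route as the paper, which proves the proposition by combining Lemma~\ref{lemma:b-2} and Lemma~\ref{lemma:proj} with the pull-back (comparison) formula for $\psi$-classes exactly as in the standard proof of the string equation; your one-point-at-a-time iteration, the use of $\lambda_g\lambda_{g-1}$ to stay on the rational-tails locus, and the bookkeeping by ordered partitions $I_0\sqcup\cdots\sqcup I_n$ with sign $(-1)^{g-|I_0|}$ are precisely the intended argument, spelled out in more detail than the paper's sketch.
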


\begin{proof}[Sketch of a proof]
The argument that  derives this proposition from Lemmas~\ref{lemma:b-2} and~\ref{lemma:proj} is a straighforward application of the pull-back formula for $\psi$-classes, c.~f. proof of string equation in~\cite{Wit}. See~\cite{Sha1,Sha2} for the same argument applied in some other cases that involve DR-cycles.
\end{proof}

\subsection{Expression for a $\psi$-class on the initial cycle}\label{sec:recursion}

In general, an initial DR-cycle is the image of a particular space of admissible covers where one has a map to the 
target genus $0$ curve. A lemma of Ionel~\cite{Ion} states that the $\psi$-class lifted from the DR-cycle is proportional 
to a $\psi$-class lifted from the moduli space of target genus $0$ curves. This allows to use the genus $0$ topological 
recursion relation for a $\psi$-class on double ramification cycles. The result of that can be described on the level of intersection 
numbers by the following proposition.

\begin{proposition} \label{prop:recursion}
For any positive $a,a_1,\dots,a_n$ and for any non-negative $d,d_1,\dots,d_n$, we have the following recursion relation:
\begin{align*}
& a \cdot (2g+n)\cdot \left\langle
\genfrac[]{0pt}{}{ a}{d+1} \prod_{i=1}^n \genfrac[]{0pt}{}{ a_i}{d_i}
\right\rangle^{DR}_g = \\
\notag
& 
\sum_{\begin{smallmatrix}
I\sqcup J =\\
\{1,\dots,n\}
\end{smallmatrix}} \left(
\left(a+\sum\nolimits_{j\in J}a_j\right)\cdot |I|\cdot
\left\langle
\genfrac[]{0pt}{}{ a+\sum_{j\in J}a_j}{0} \prod_{i\in I} \genfrac[]{0pt}{}{ a_i}{d_i}
\right\rangle^{DR}_0
\left\langle
\genfrac[]{0pt}{}{ a}{d} \prod_{j\in J} \genfrac[]{0pt}{}{ a_j}{d_j}
\right\rangle^{DR}_g \right.\\
\notag
& -\left(\sum\nolimits_{j\in J}a_j\right)\cdot (2g+|J|-1)\cdot
\left\langle
\genfrac[]{0pt}{}{ a}{d}
\genfrac[]{0pt}{}{\sum_{j\in J}a_j}{0} \prod_{i\in I} \genfrac[]{0pt}{}{ a_i}{d_i}
\right\rangle^{DR}_0
\left\langle
 \prod_{j\in J} \genfrac[]{0pt}{}{ a_j}{d_j}
\right\rangle^{DR}_g \\
\notag
& 
+\left(a+\sum\nolimits_{j\in J}a_j\right)\cdot \left(2g+|I|\right)\cdot
\left\langle
\genfrac[]{0pt}{}{ a+\sum_{j\in J}a_j}{0} \prod_{i\in I} \genfrac[]{0pt}{}{ a_i}{d_i}
\right\rangle^{DR}_g
\left\langle
\genfrac[]{0pt}{}{ a}{d} \prod_{j\in J} \genfrac[]{0pt}{}{ a_j}{d_j}
\right\rangle^{DR}_0 \\
\notag
& \left. -\left(\sum\nolimits_{j\in J}a_j\right)\cdot (|J|-1)\cdot
\left\langle
\genfrac[]{0pt}{}{ a}{d}
\genfrac[]{0pt}{}{ \sum_{j\in J}a_j}{0} \prod_{i\in I} \genfrac[]{0pt}{}{ a_i}{d_i}
\right\rangle^{DR}_g
\left\langle \prod_{j\in J} \genfrac[]{0pt}{}{ a_j}{d_j}
\right\rangle^{DR}_0 
\right)
\end{align*}
\end{proposition}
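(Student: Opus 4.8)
The plan is to realize the initial cycle $DR_g\bigl(m_a\prod_{i=1}^n m_{a_i}\bigr)$ as the image of a space of admissible covers and to transport the computation of the distinguished $\psi$-class onto the moduli space of target curves, where the genus $0$ topological recursion relation is available. A generic point of this cycle carries a degree $D=a+\sum_i a_i$ map $f\colon C_g\to\CP^1$ with full ramification over $\infty$ at $x_0$ and ramification profile $(a,a_1,\dots,a_n)$ over $0$ at $x,x_1,\dots,x_n$; by Riemann--Hurwitz this cover has exactly $2g+n$ simple branch points. Hence the associated space of admissible covers $\H$ carries a branch morphism $br\colon\H\to\oM_{0,2g+n+2}$, the target markings being $0$, $\infty$, and the $2g+n$ simple branch points, together with a source morphism to $\oM_{g,n+2}$ whose image is the DR-cycle. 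This already accounts for the factor $(2g+n)$ on the left: it is the number of simple branch points, i.e.\ the number of free markings on the target.

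First I would invoke Ionel's lemma \cite{Ion}, which compares the cotangent line at $x$ on the source with the cotangent line at its image on the target: restricted to $\H$ one has $\psi_x=a\cdot br^*\Psi$, the factor $a$ being the ramification index at $x$. This supplies the remaining factor $a$ on the left and trades one power of $\psi_x$ for a class pulled back from $\oM_{0,2g+n+2}$. Applying the genus $0$ topological recursion relation to $\Psi$ rewrites $br^*\Psi$ as a sum of target boundary divisors, on each of which the target $\CP^1$ degenerates. Pulling these back to $\H$, the source curve acquires a node and splits into two subcovers glued along the preimage of the target node, and the remaining markings $x_1,\dots,x_n$ are distributed between the two pieces, which is the origin of the sum over $I\sqcup J=\{1,\dots,n\}$. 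The key geometric input is that $\lambda_g\lambda_{g-1}$ vanishes off $\M^{rt}$: this forces all of the genus to concentrate on one of the two subcovers while the other is rational, so that every surviving contribution factors as a product of a genus $0$ DR-integral and a genus $g$ DR-integral, exactly of the shape on the right.

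Next I would read off the four combinatorial coefficients. The local degree of $f$ over the target node, i.e.\ the ramification at the gluing point, produces the node value, which is $\genfrac[]{0pt}{}{a+\sum_{j\in J}a_j}{0}$ when $x$ passes through the node together with the $J$-markings and $\genfrac[]{0pt}{}{\sum_{j\in J}a_j}{0}$ otherwise. A Riemann--Hurwitz count on each of the two subcovers produces the branch-number coefficients: a genus $h$ piece with $p$ positive markings carries $2h+p-1$ simple branch points, which yields $|I|$, $2g+|J|-1$, $2g+|I|$ and $|J|-1$ in the four cases. The two signs distinguish whether $x$ flows through the node with the $J$-markings (node value $a+\sum_{j\in J}a_j$, sign $+$) or remains on the component of $x_0$ (node value $\sum_{j\in J}a_j$, sign $-$); this mirrors the discrepancy between $\psi_x$ and its pullback under the stabilization collapsing the rational bubble that carries $x$, i.e.\ the usual $\psi-\delta$ correction.

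The routine but delicate part, and the main obstacle, is the bookkeeping of multiplicities in the degeneration: one must verify that the preimage of the target node is a single fully ramified point, so that the two sides are genuine DR-cycles rather than cycles carrying extra ramification markings, and one must compute the degree of the gluing map of admissible covers, which is precisely what supplies the node-value factor. I would organize this by first imposing the dimension constraint $\sum d_i=n-1$ separately on each of the four families; this fixes the genus $0$ and genus $g$ selection rules, forces the node preimage to be a single point, and rules out all other degenerations. Only then would I match the four surviving configurations against the four terms, including their signs, using Lemmas~\ref{lemma:b-2} and~\ref{lemma:proj} to normalize the boundary restrictions.
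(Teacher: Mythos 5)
Your proposal follows essentially the same route as the paper's (sketched) proof: realize the initial DR-cycle as the image of a space of admissible covers, use Ionel's lemma to trade $\psi_x$ for $a$ times the pullback of a $\psi$-class from the genus~$0$ target, apply the genus~$0$ topological recursion relation to that class, and use the vanishing of $\lambda_g\lambda_{g-1}$ off $\M^{rt}_{g,2+n}$ to retain only the rational-tails degenerations, exactly as in \cite{Sha1,Sha2}, to which the paper defers the details. The combinatorial bookkeeping you describe (Riemann--Hurwitz counts of simple branch points for the coefficients, the ramification at the node for the new multiplicities, and the $\psi$--$\delta$ comparison for the signs) is the intended content of that deferred computation.
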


Here we use the notation:
\begin{gather*}
\left\langle \prod_{i=1}^n \genfrac[]{0pt}{}{ a_i}{d_i}
\right\rangle^{DR}_0 :=  \int_{DR_0(\prod_{i=1}^nm_{a_i})}\psi_0^0\prod_{i=1}^n\psi_i^{d_i} \\
=\int_{\oM_{0,1+n}}\psi_0^0\prod_{i=1}^n\psi_i^{d_i} =\begin{cases}
\frac{(n-2)!}{d_1!\cdots d_n!},&\text{if $d_1+\cdots+d_n=n-2$},\\ 0,&\text{otherwise}.
\end{cases}
\end{gather*}

\begin{proof}[Sketch of a proof]
This proposition is a very closed relative of the similar formulas in~\cite{Sha1,Sha2} and is based on the Ionel's lemma in the way described above. We only take into account the components of the general expression of a $\psi$-class restricted to a DR-cycle that belong to $\M^{rt}_{g,2+n}$, the rest of the prove is identical to~\cite{Sha1,Sha2}.
\end{proof}

There is a nice interpretation of this recursion in terms of generating vector fields for the intersection numbers over DR-cycles.
Let $\beta$ and $t_{a,d}$, $a\geq 1$, $d\geq 0$, be the formal variables. Define
\begin{align*}
V_g & \colon = \sum_{n=1}^\infty \frac{\beta^{2g+n-1}}{n!} \sum_{\begin{smallmatrix}a_1,\dots,a_n \\ d_1+\cdots+d_n=n-1\end{smallmatrix}}
\left\langle
\prod_{i=1}^n \genfrac[]{0pt}{}{ a_i}{d_i}
\right\rangle^{DR}_g
\prod_{i=1}^n t_{a_i,d_i} \cdot 
\frac{\left(\sum\nolimits_{i=1}^n a_i\right) \partial}{\partial t_{\left(\sum\nolimits_{i=1}^n a_i\right),0}} \\
V_0 & \colon = \sum_{n=2}^\infty \frac{\beta^{n-1}}{n!} \sum_{\begin{smallmatrix}a_1,\dots,a_n \\ d_1+\cdots+d_n=n-2\end{smallmatrix}}
\left\langle
\prod_{i=1}^n \genfrac[]{0pt}{}{ a_i}{d_i}
\right\rangle^{DR}_0
\prod_{i=1}^n t_{a_i,d_i} \cdot 
\frac{\left(\sum\nolimits_{i=1}^n a_i\right) \partial}{\partial t_{\left(\sum\nolimits_{i=1}^n a_i\right),0}}
\end{align*}

Then the recursion relation in proposition~\ref{prop:recursion} can be written as
\begin{equation}\label{eq:recursion}
\frac{a\partial^2V_g}{\partial t_{a,d+1}\partial\beta} = \left[\frac{\partial V_g}{\partial t_{a,d}},\frac{\partial V_0}{\partial \beta}\right]
+ \left[\frac{\partial V_0}{\partial t_{a,d}},\frac{\partial V_g}{\partial \beta}\right].
\end{equation}

\subsection{Initial values}

Using proposition~\ref{prop:recursion} one can eliminate all $\psi$-classes. This reduces the problem of computation of an integral over a DR-cycle to the following set of initial values.

\begin{proposition}\label{prop:initial}
 There is a constant $C_g$ that depends only on genus $g$, such that for any $a\geq 1$
\begin{equation*}
\left\langle\genfrac[]{0pt}{}{ a}{0}\right\rangle^{DR}_g = C_g\cdot \left(a^{2g}-1\right).
\end{equation*}
\end{proposition}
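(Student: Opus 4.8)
Throughout write $P(a):=\langle\genfrac[]{0pt}{}{a}{0}\rangle^{DR}_g=\int_{DR_g(m_a)}\lambda_g\lambda_{g-1}$, the integral of $\lambda_g\lambda_{g-1}$ over the closure in $\oM_{g,2}$ of the locus $\H(a)$ of triples $(C,x_0,x_1)$ carrying a rational function with divisor $a x_1-a x_0$. Since $\lambda_g\lambda_{g-1}$ vanishes off $\M^{rt}_{g,2}$, only the rational-tails restriction of $DR_g(m_a)$ enters. The guiding principle is that $a$ affects this integral only through the multiplication-by-$a$ endomorphism $[a]$ of the universal Jacobian, whose degree is $a^{2g}=\#\mathcal J(C)[a]$ and which multiplies the theta divisor by $a^2$. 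The plan is to use this to show $P(a)=C_g a^{2g}+(\text{const})$ with no intermediate powers of $a$, and then to fix the constant by the vanishing $P(1)=0$.

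First I would introduce the Abel--Jacobi class $\Delta_a:=\tfrac1{g!}\,s_a^*\Theta^{g}$ attached to the section $s_a\colon(C,x_0,x_1)\mapsto\mathcal O(a x_1-a x_0)$ of the universal Jacobian, noting that $s_a=[a]\circ s_1$; by Hain's formula $\Delta_a$ agrees with $DR_g(m_a)$ away from the diagonal $\{x_0=x_1\}$. On $\M^{rt}_{g,2}$ the only boundary divisor is $\delta_{0,\{0,1\}}$, where the two points lie on a rational bubble; its total weight is $-a+a=0$, so every boundary correction in Hain's formula drops out and $\Delta_a$ restricts to $\tfrac{a^{2g}}{2^g g!}(\psi_0+\psi_1)^g$ --- that is, to $a^{2g}$ times its value at $a=1$, as predicted by $[a]^*\Theta\equiv a^2\Theta$. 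Hence $\int\Delta_a\,\lambda_g\lambda_{g-1}=C_g a^{2g}$ with $C_g=\tfrac1{2^g g!}\int_{\M^{rt}_{g,2}}(\psi_0+\psi_1)^g\lambda_g\lambda_{g-1}$, a genus-dependent number carrying no intermediate powers of $a$.

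Next I would compare $DR_g(m_a)=[\overline{\H(a)}]$ with $\Delta_a$. The section $s_a$ contracts the entire diagonal $\{x_0=x_1\}$ (the divisor $\delta_{0,\{0,1\}}$ together with its deeper strata) to the zero section for every $a$, since there $\mathcal O(a x_1-a x_0)$ is trivial; thus $\Delta_a$ acquires an excess component along the diagonal that is absent from $\H(a)$, and $DR_g(m_a)=\Delta_a-[\mathrm{excess}]$. The crucial claim is that $\int[\mathrm{excess}]\,\lambda_g\lambda_{g-1}$ is independent of $a$: along the diagonal one has $s_a=0$ and $ds_a=a\,ds_1$, so the parameter $a$ merely rescales the linearization of the section and leaves the excess class, read off from the normal-bundle data, unchanged. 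Granting this, $P(a)=C_g a^{2g}-D_g$ with $C_g,D_g$ depending only on $g$.

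Finally I would determine $D_g$ by specialization. For $a=1$ the open locus $\H(1)$ is empty, because a curve of genus $g\geq2$ admits no degree-one map to $\CP^1$; thus $[\overline{\H(1)}]=0$ and $P(1)=0$, which forces $D_g=C_g$ and yields $P(a)=C_g(a^{2g}-1)$ with $C_g$ the genus-dependent constant above. The heuristic is transparent: the Abel--Jacobi class counts, fibrewise, the $a^{2g}$ torsion configurations $x_1-x_0\in\mathcal J(C)[a]$ weighted by $\lambda_g\lambda_{g-1}$, and removing the trivial configuration $x_1=x_0$ accounts for the $-1$. The main obstacle is the middle step, namely a clean proof that the diagonal excess contributes an $a$-independent constant (equivalently, that $P$ has no terms $a^{2},\dots,a^{2g-2}$); this is precisely the sort of universal-Jacobian and admissible-covers computation that the paper chooses to carry out only in sketch.
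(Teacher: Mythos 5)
Your proof follows essentially the same route as the paper's: the sketch given there reduces the statement to the fact that $DR_g(m_a)$ is proportional to a fixed generator of the socle $R^g(\M^{rt}_{g,2})$ with coefficient $a^{2g}-1$, established by exactly the universal Jacobian argument you describe --- the factor $a^{2g}$ coming from multiplication by $a$ on the Jacobian and the $-1$ from discarding the diagonal component (this is the proof of Theorem~3.5 in Goulden--Jackson--Vakil that the paper cites). The step you flag as the main obstacle, the $a$-independence of the diagonal excess, is precisely what that reference supplies, and the clean way to close it is the observation that $[a]$ is \'etale near the zero section, so the zero scheme of $s_a=[a]\circ s_1$ along the diagonal component, and hence the localized Euler class supported there, literally coincides with that of $s_1$.
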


\begin{proof}[Sketch of a proof]
The proof of this proposition is based on the fact that $DR_g(m_a)$ is proportional to a generator of $R^g(\M^{rt}_{g,2})$ with the coefficient $a^{2g}-1$. That can be proved by a universal Jacobian argument, see the proof in~\cite[Proof of Theorem~3.5]{GouJacVak}.
\end{proof}


\section{Basic properties of integrals over DR-cycles}

Here we discuss how the integrals over DR-cycles $DR_g(\prod_{i=1}^nm_{a_i})$ depends on the multiplicities $a_1,\dots,a_n$. 

\subsection{A small simplification}\label{sec:simplification}
Propositions~\ref{prop:recursion} and~\ref{prop:initial} imply that the integral 
$
\left\langle\prod_{i=1}^n \genfrac[]{0pt}{}{ a_i}{d_i}
\right\rangle^{DR}_g
$
is a sum of two rational functions in $a_1,\dots,a_n$ of degree $2g$ and $0$ whose denominators divide $\prod_{i=1}^n a_i^{d_i}$. We know from proposition~\ref{prop:binomial} that in the computation of a particular integral over $\oM_{g,1+n}$ all degree $0$ terms should cancel each other, so we can ignore them in the course of computation. An explicit statement about their values is the following:

\begin{lemma} Let $n\geq 1$. For any non-negative $d_1,\dots,d_n$, $d_1+\cdots+d_n=n-1$, we consider
the degree $0$ part of the expression of the integral 
$
\left\langle\prod_{i=1}^n \genfrac[]{0pt}{}{ a_i}{d_i}
\right\rangle^{DR}_g
$
 as a rational function in $a_1,\dots,a_n$. It
is independent of $a_1,\dots,a_n$ and is equal to 
$-C_g\cdot (n-1)!/d_1!\cdots d_n!.$
\end{lemma}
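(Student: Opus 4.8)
The plan is to argue by induction on the number $n$ of entries in the bracket, peeling off one unit of $\psi$-power by means of the recursion of Proposition~\ref{prop:recursion} and reducing to brackets with strictly fewer entries. The base case $n=1$ is immediate from Proposition~\ref{prop:initial}: since $\langle\genfrac[]{0pt}{}{a}{0}\rangle^{DR}_g=C_g(a^{2g}-1)$, the only admissible bracket has degree $0$ part equal to the constant $-C_g=-C_g\cdot 0!/0!$.

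For the inductive step fix $n\geq 2$. As $\sum_i d_i=n-1\geq 1$, at least one power is positive, so after relabelling our bracket equals $\langle\genfrac[]{0pt}{}{a}{d+1}\prod_{i=1}^{n-1}\genfrac[]{0pt}{}{a_i}{d_i}\rangle^{DR}_g$, and we may apply Proposition~\ref{prop:recursion} with $n-1$ auxiliary points. Inspecting the four summands, one checks that in every term with a nonzero coefficient the genus-$g$ factor has at most $n-1$ entries: the explicit factor $|I|$ kills the term $I=\emptyset$ in the first sum, while the genus-$0$ factors force $|J|\geq 1$ in the third sum and $|J|\geq 2$ in the fourth. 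Hence the induction hypothesis applies to every genus-$g$ factor on the right-hand side, and one verifies along the way that the dimension constraints of these factors are automatically compatible with the overall one.

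I would then extract from both sides the part that is homogeneous of degree $1$ in the variables $a,a_1,\dots,a_{n-1}$. By the decomposition into a homogeneous rational function of degree $2g$ plus one of degree $0$ recalled above in Section~\ref{sec:simplification}, the degree $1$ part on the left is $a\cdot(2g+n-1)\cdot G_0$, where $G_0$ is the degree $0$ part we wish to identify. On the right each summand is a product of a linear prefactor, a genus-$0$ factor (a pure number by the explicit values of $\langle\,\cdot\,\rangle^{DR}_0$, hence independent of the $a$'s), and a genus-$g$ factor; since $2g+1\neq 1$, the degree $1$ part of such a product sees only the degree $0$ piece of the genus-$g$ factor, which the induction hypothesis makes explicit. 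After this substitution the right-hand side becomes a finite sum, over set partitions $\{1,\dots,n-1\}=I\sqcup J$, of products of multinomial coefficients with the linear prefactors $a+\sum_{j\in J}a_j$ and $\sum_{j\in J}a_j$.

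The crux, and the step I expect to be the main obstacle, is the resulting purely combinatorial identity: this sum must equal $a\cdot(2g+n-1)\cdot\bigl(-C_g\,(n-1)!/((d+1)!\,d_1!\cdots d_{n-1}!)\bigr)$. It has two faces. First, the coefficient of each individual $a_j$ must vanish---this is exactly what forces $G_0$ to be a genuine constant, and it requires the $\sum_{j\in J}a_j$-contributions of the four summands to cancel against one another. Second, the coefficient of $a$ must evaluate to the asserted multinomial constant. I expect to handle both at once by encoding the subset sums as exponential generating functions: the factorial weights $|I|!/\prod_{i\in I}d_i!$ and the like are precisely the coefficients of products of such series, and the dimension constraints $\sum_{i\in I}d_i=|I|-1$ (or $|I|$) single out one homogeneous slice, so that the target is governed by the elementary identity $\sum_{\sum d_i=m-1}\frac{(m-1)!}{d_1!\cdots d_m!}\prod x_i^{d_i}=(x_1+\cdots+x_m)^{m-1}$, of the same tree-counting type that controls the genus-$0$ $\psi$-intersections. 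Dividing by $a(2g+n-1)$ then yields the claimed value. The smallest case $n=2$ already exhibits the mechanism: the stray terms in the single auxiliary variable produced by the second and third summands cancel exactly, and the surviving coefficient of $a$ is $-(2g+1)C_g$, so that $G_0=-C_g$, in agreement with $-C_g\cdot 1!/(1!\,0!)$.
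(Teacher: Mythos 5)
Your proposal is correct and follows essentially the same route as the paper, which proves this lemma by induction on $n$ via a direct application of the recursion in Proposition~\ref{prop:recursion} with base case given by Proposition~\ref{prop:initial}. Your additional organizing device---isolating the degree-$1$ homogeneous part of both sides so that only the degree-$0$ pieces of the genus-$g$ factors survive, then reducing to a multinomial identity---is a sound way to carry out the ``straightforward application'' the paper alludes to, and your $n=2$ check agrees with the asserted value $-C_g$.
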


\begin{proof} It is proved by induction on $n$ via a straightforward application of the recursion relation in proposition~\ref{prop:recursion}.
\end{proof}

One more observation is that all integrals that we consider are proportional to $C_g$, some basic constant that is related to the choice of a particular isomorphism $\int \cdot\lambda_{g}\lambda_{g-1}\colon R^{g-2}\to \Q$. For convenience we may assume that $C_g=1$. Therefore, we can assume for simplicity that the initial values for our computational algorithm are given simply by 
$
\left\langle\genfrac[]{0pt}{}{ a}{0}\right\rangle^{DR}_g = a^{2g}.
$
We keep to this simplified assumption till the end of the paper.

\subsection{Polynomiality}
Taking into account the simplification in section~\ref{sec:simplification} we see that the integral 
$
\left\langle\prod_{i=1}^n \genfrac[]{0pt}{}{ a_i}{d_i}
\right\rangle^{DR}_g
$
is a rational function in $a_1,\dots,a_n$ of degree $2g$ whose denominator divides $\prod_{i=1}^n a_i^{d_i}$.
In fact, one can say more than that.

\begin{proposition} \label{prop:polynomial}
Let $n$ be positive integer. For any non-negative integers $d_1,\dots,d_n$, $d_1+\cdots+d_n=n-1$, the integral 
$
\left\langle\prod_{i=1}^n \genfrac[]{0pt}{}{ a_i}{d_i}
\right\rangle^{DR}_g
$ 
is a polynomial in $a_1,\dots,a_n$.
\end{proposition}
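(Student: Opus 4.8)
The plan is to show polynomiality by induction on $n$, using the recursion of Proposition~\ref{prop:recursion} as the engine and the explicit initial values $\left\langle\genfrac[]{0pt}{}{a}{0}\right\rangle^{DR}_g=a^{2g}$ as the base case. From Section~\ref{sec:simplification} we already know that the integral $\left\langle\prod_{i=1}^n\genfrac[]{0pt}{}{a_i}{d_i}\right\rangle^{DR}_g$ is a rational function in $a_1,\dots,a_n$ of degree $2g$ whose denominator divides $\prod_{i=1}^n a_i^{d_i}$, and that the degree-$0$ part is the constant $-(n-1)!/(d_1!\cdots d_n!)$ (with $C_g=1$). So what remains is to rule out the presence of any genuine poles, i.e. to show that the degree-$2g$ piece has no $a_i$ in the denominator. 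The idea is that each application of the recursion, read as a statement about rational functions, produces new terms whose potential poles can be controlled and shown to cancel.

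First I would set up the induction carefully. For $n=1$ the statement is exactly the initial value $a^{2g}$, which is a polynomial, so the base case is immediate. For the inductive step I want to compute an arbitrary integral with $\sum d_i=n-1$ and at least one $d_i\geq 1$ (if all $d_i=0$ the constraint forces $n=1$, already handled). Relabel so that the positive index is the one we reduce: writing $d_i = d+1$ for that slot, I apply Proposition~\ref{prop:recursion} to express $a\cdot(2g+n)\cdot\left\langle\genfrac[]{0pt}{}{a}{d+1}\prod\genfrac[]{0pt}{}{a_i}{d_i}\right\rangle^{DR}_g$ as a sum over splittings $I\sqcup J$ of products of a genus-$g$ factor with strictly fewer marked points and a genus-$0$ factor. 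By the inductive hypothesis every genus-$g$ factor appearing on the right is a polynomial, and the genus-$0$ factors are the explicit multinomial coefficients $(n-2)!/(d_1!\cdots d_n!)$ or $0$, hence constants. Therefore the entire right-hand side is a polynomial in all the variables.

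The main obstacle, and the real content of the step, is dividing by the prefactor $a\cdot(2g+n)$: a priori this only tells me that $a\cdot(2g+n)\cdot\left\langle\genfrac[]{0pt}{}{a}{d+1}\prod\genfrac[]{0pt}{}{a_i}{d_i}\right\rangle^{DR}_g$ is a polynomial, so the integral itself is a polynomial \emph{divided by} $a$, and I must prove that the polynomial on the right is actually divisible by $a$. Equivalently, the right-hand side must vanish identically when $a=0$. I would prove this by inspecting the four groups of terms in the recursion at $a=0$: the factors $\left(a+\sum_{j\in J}a_j\right)$ and $\left(\sum_{j\in J}a_j\right)$ and the genus-$0$ correlators each carry the shifted first argument $a+\sum_{j\in J}a_j$ or $\sum_{j\in J}a_j$, and setting $a=0$ should pair up the first and third groups, and the second and fourth groups, into differences that cancel because the genus-$g$ and genus-$0$ correlators in those pairs become equal or the combinatorial coefficients match. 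Concretely, at $a=0$ the genus-$0$ factor $\left\langle\genfrac[]{0pt}{}{a+\sum a_j}{0}\prod_{i\in I}\genfrac[]{0pt}{}{a_i}{d_i}\right\rangle^{DR}_0$ and its counterpart, together with the coefficients $(2g+|I|)$ versus $|I|$ and $(2g+|J|-1)$ versus $(|J|-1)$, are arranged precisely so that the $a=0$ specialization telescopes to zero; verifying this cancellation term-by-term is the one place where I expect to have to be genuinely careful rather than formal.

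\emph{Remark.} An alternative, perhaps cleaner, route is to phrase the whole argument through the generating vector fields $V_g$ and the commutator relation~\eqref{eq:recursion}: polynomiality of the correlators is equivalent to a regularity statement for $V_g$, and the factor of $a$ on the left of~\eqref{eq:recursion} together with the $\partial/\partial\beta$ on the right encodes exactly the divisibility I need. Either way the crux is the same vanishing at $a=0$, so I would carry out the bookkeeping in whichever formulation makes the cancellation most transparent; I expect the direct term-by-term check on Proposition~\ref{prop:recursion} to be the more elementary of the two.
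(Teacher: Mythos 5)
Your overall strategy---induct on the number of marked points, apply the recursion of Proposition~\ref{prop:recursion} once, observe that every genus-$g$ factor on the right-hand side has strictly fewer points while every genus-$0$ factor is an explicit constant, and then confront the division by the prefactor $a$---is sound in outline and correctly isolates the crux: the right-hand side must vanish at $a=0$. But your proposed mechanism for that vanishing does not work, and this is exactly the step where the paper spends all of its effort. For a fixed splitting $I\sqcup J$ the first and third summands have the genus labels swapped between their two correlator factors and carry unequal coefficients $|I|$ versus $2g+|I|$; they do not combine into a difference that cancels at $a=0$, and the same holds for the second and fourth summands ($2g+|J|-1$ versus $|J|-1$, with the same sign). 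Already in the smallest case the recursion gives
$a(2g+1)\left\langle\genfrac[]{0pt}{}{a}{1}\genfrac[]{0pt}{}{a_1}{0}\right\rangle^{DR}_g=a^{2g+1}+2g(a+a_1)^{2g+1}-2g\,a_1^{2g+1}$,
and the cancellation at $a=0$ occurs between the \emph{second} and \emph{third} summands of the splitting $J=\{1\}$, while the remaining term vanishes only because it carries an explicit factor of $a$; there is no first-with-third pairing.

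The deeper problem is that the vanishing at $a=0$ cannot be extracted from your inductive hypothesis alone: knowing merely that the lower correlators are polynomials does not determine the linear combination of their values appearing at $a=0$, because the second and third summands involve genuinely different correlators, namely $\left\langle\prod_{j\in J}\genfrac[]{0pt}{}{a_j}{d_j}\right\rangle^{DR}_g$ versus $\left\langle\genfrac[]{0pt}{}{\sum_{j\in J}a_j}{0}\prod_{i\in I}\genfrac[]{0pt}{}{a_i}{d_i}\right\rangle^{DR}_g$, whose relation is not formal. The paper supplies the missing quantitative input in two steps. First it reduces to the correlators $I_n(a,a_1,\dots,a_n)=\left\langle\genfrac[]{0pt}{}{a}{n}\prod_{i}\genfrac[]{0pt}{}{a_i}{0}\right\rangle^{DR}_g$: applying the recursion at every point except the first introduces no $a_1$ in any denominator, so polynomiality of the $I_m$ gives polynomiality of the general correlator in $a_1$, and symmetry upgrades this to all variables. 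Then it proves for these special correlators that (i) setting $a_n=0$ simply removes the $n$-th point (Lemma~\ref{lemma:zero}), so only the monomials divisible by $a_1\cdots a_n$ are genuinely new, and (ii) that part equals $\sum_i a^i\,\frac{2g}{n+i}\binom{2g}{i}(a_1+\cdots+a_n)^{2g-i}$ modulo the inessential monomials (Lemma~\ref{lemma:equiv}), from which the vanishing of the coefficient of $a^{-1}$ is read off directly. Some such explicit computation, or an identity equivalent to it, is unavoidable; without it your induction does not close.
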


\begin{proof} The proposition in general follows from the particular case when $d_1=n-1$ and $d_2=\cdots=d_n=0$. Indeed, applying the recursion relation in proposition~\ref{prop:recursion} to $\psi$-classes at all points but the first one we come to this particular case, and there is no occurence of $a_1$ in the denominator so far. Hence, the whole integral is a polynomial in $a_1$, and, therefore, in all $a_i$, $i=1,\dots,n$.
So, this special case is enough. It is proven below, in lemma~\ref{lemma:polynomiality} based on lemmas~\ref{lemma:zero} and~\ref{lemma:equiv}. \end{proof}

So, we consider the integral 
$$
I_n(a,a_1,\dots,a_n):=\left\langle\genfrac[]{0pt}{}{ a}{n}\prod_{i=1}^n \genfrac[]{0pt}{}{ a_i}{0}
\right\rangle^{DR}_g, \qquad n\geq 0
$$
It is a homogeneous function of degree $g$ that can be expanded as $I_n=\sum_{i=-n}^{2g} a^i\cdot P_{n,2g-i}(a_1,\dots,a_n)$, where $P_{n,k}$ are some symmetric polynomials of degree $k$ in $n$ variables. Explicit computations with the recursion relation  in proposition~\ref{prop:recursion} give the first few formulas for $I_n$:
\begin{align}
I_0 & = a^{2g}; \label{eq:I0} \\
I_1 & = a^{2g} + \sum_{i=0}^{2g-1} a^i\cdot \frac{2g}{2g+1}\binom{2g+1}{2g-i} a_1^{2g-i}. \label{eq:I1}
\end{align}

\begin{lemma}\label{lemma:zero} For any $n\geq 1$, we have:
$$
I_n(a,a_1,\dots,a_{n-1},0)=I_{n-1}(a,a_1,\dots,a_{n-1}).
$$
\end{lemma}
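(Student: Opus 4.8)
The plan is to prove the lemma by induction on $n$, using the recursion relation of Proposition~\ref{prop:recursion} applied to the distinguished point (the one carrying the high $\psi$-power). The base case $n=1$ is immediate from the explicit formula~\eqref{eq:I1}: setting $a_1=0$ kills every summand $a^i a_1^{2g-i}$ with $2g-i\geq 1$, leaving $a^{2g}=I_0$. For the inductive step I would write out the recursion for $I_n=\left\langle \genfrac[]{0pt}{}{a}{n}\prod_{i=1}^n\genfrac[]{0pt}{}{a_i}{0}\right\rangle^{DR}_g$, that is, apply Proposition~\ref{prop:recursion} with $d+1=n$ and all $d_i=0$.

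The key simplification is that the genus~$0$ correlators occurring in the recursion are governed by the dimension constraint $\sum d_i = (\#\text{points})-2$. Since every $a_i$ carries $\psi$-power $0$, only a handful of the splittings $I\sqcup J=\{1,\dots,n\}$ give a nonzero genus~$0$ factor. A short case analysis shows: in the first term only $|I|=1$ survives, producing $\sum_{i}(a+S-a_i)\,I_{n-1}^{(\hat\imath)}$ with $S=\sum_j a_j$; in the second only $|J|=1$ survives, giving $-2g\sum_j a_j^{2g+1}$; in the third only $I=\varnothing$ survives, giving $2g(a+S)^{2g+1}$; and in the fourth only $|J|=2$ survives, giving $-\sum_{j<k}(a_j+a_k)\,I_{n-1}(a;a_j+a_k,\dots)$. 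Here $I_{n-1}^{(\hat\imath)}$ denotes $I_{n-1}$ with the argument $a_i$ omitted, and the genus-$g$ single-point values $\left\langle\genfrac[]{0pt}{}{a_j}{0}\right\rangle^{DR}_g=a_j^{2g}$ and $\left\langle\genfrac[]{0pt}{}{a+S}{0}\right\rangle^{DR}_g=(a+S)^{2g}$ come from the normalized initial values of Proposition~\ref{prop:initial}.

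I would then set $a_n=0$. The $i=n$ summand of the first term and the $k=n$ summands of the fourth term collapse to multiples of $I_{n-1}(a;a_1,\dots,a_{n-1})$ (the merged argument $a_j+a_n$ becomes $a_j$, so the remaining multiplicities are again $a_1,\dots,a_{n-1}$); collecting these cancels all the dependence on $S':=a_1+\cdots+a_{n-1}$ and leaves precisely $a\,I_{n-1}$. In every remaining summand the genus-$g$ factor still carries the variable $a_n$, so the inductive hypothesis applies and rewrites each such $I_{n-1}$ with $a_n$ set to zero as the appropriate $I_{n-2}$. The resulting sum is verbatim the right-hand side of the recursion of Proposition~\ref{prop:recursion} written for $I_{n-1}$, hence equals $a(2g+n-1)\,I_{n-1}$. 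Therefore $a(2g+n)\,I_n|_{a_n=0}=a\,I_{n-1}+a(2g+n-1)\,I_{n-1}=a(2g+n)\,I_{n-1}$, and dividing by $a(2g+n)$ yields the claim.

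The main obstacle I expect is the bookkeeping: correctly enumerating the surviving splittings and, above all, recognizing that after setting $a_n=0$ the leftover terms reassemble exactly into the recursion for $I_{n-1}$. The delicate point is keeping careful track of which genus-$g$ factors still retain the variable $a_n$ (so that the inductive hypothesis is legitimately applicable) versus which have already lost it and directly produce $I_{n-1}$; everything else is routine arithmetic with the genus~$0$ values.
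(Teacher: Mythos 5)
Your proof is correct and takes essentially the same route as the paper: the paper also argues by induction on $n$, reading the base case off the explicit formula for $I_1$, rewriting Proposition~\ref{prop:recursion} in the specialized form~\eqref{eq:In}, and observing that after setting $a_n=0$ the resulting expression reduces (via the induction hypothesis) to the same recursion applied to $I_{n-1}$. Your enumeration of the surviving splittings and the final bookkeeping $a\,I_{n-1}+a(2g+n-1)\,I_{n-1}=a(2g+n)\,I_{n-1}$ correctly supply the details the paper leaves implicit.
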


\begin{proof} This lemma is an exercise on the recursion relation in proposition~\ref{prop:recursion}. We prove it by induction. For $n=1$, it follows from the formula for $I_1$ above. For an arbitrary $n$,
\begin{align} \label{eq:In}
& I_n(a,a_1,\dots,a_{n}) = \\ \notag
& \frac{1}{2g+n} \sum_{i=1}^{n} \frac{a+\sum_{j\not= i}a_j}{a} I_{n-1}(a,a_1,\dots,\hat{a_i},\dots,a_n) \\ \notag
& - \frac{1}{2g+n} \sum_{i<j} \frac{a_i+a_j}{a} I_{n-1}(a,a_1,\dots,\hat{a_i},\dots,\hat{a_j},\dots,a_n,a_i+a_j) \\ \notag
& + \frac{2g}{2g+n} \frac{\left(a+\sum\nolimits_{i=1}^{n}a_i\right)^{2g+1}}{a} 
- \frac{2g}{2g+n} \sum_{i=1}^{n} \frac{a_i^{2g+1}}{a}.
\end{align}
We apply this recursion to $I_n(a,a_1,\dots,a_{n-1},0)-I_{n-1}(a,a_1,\dots,a_{n-1})$. The resulting formula turns to be equal to zero due to the induction assumption.
\end{proof}

This lemma means that $I_n(a,a_1,\dots,a_n)$ splits into the terms that can be expressed in $I_{<n}$ and the terms that are divisible by $a_1\cdots a_n$. For convenience, we introduce a new notation. 
We say that two polynomials in $a_1,\dots,a_n$, $f$ and $g$, are equivalent (notation: $f\equiv g$) if $f-g$ doesn't contain monomials divisible by $a_1\cdots a_n$.

\begin{lemma}\label{lemma:equiv} For any $n\geq 1$, $-n\leq i\leq 2g-n$, we have:
$$
P_{n,2g-i}(a_1,\dots,a_n)\equiv \frac{2g}{n+i}\binom{2g}{i}(a_1+\cdots+a_n)^{2g-i}.
$$
In particular, for $i<0$, $P_{n,2g-i}\equiv 0$.
\end{lemma}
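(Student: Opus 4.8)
The plan is to deduce the lemma from a single clean statement: the part of $I_n$ divisible by $a_1\cdots a_n$ coincides, coefficient of $a^i$ by coefficient of $a^i$, with that of the closed-form function
\[
Q_n(a,a_1,\dots,a_n):=2g\int_0^1 t^{n-1}(\sigma+at)^{2g}\,dt=\sum_{i=0}^{2g}\frac{2g}{n+i}\binom{2g}{i}a^i\,\sigma^{2g-i},\qquad \sigma:=a_1+\cdots+a_n.
\]
Since the coefficient of $a^i$ in $Q_n$ is exactly $\frac{2g}{n+i}\binom{2g}{i}\sigma^{2g-i}$, and since for $i>2g-n$ this coefficient has $\sigma$-degree $<n$ and so contains no monomial divisible by $a_1\cdots a_n$, the equivalence $I_n\equiv Q_n$ yields the asserted value of $P_{n,2g-i}$ on the whole range $-n\le i\le 2g-n$, the case $i<0$ giving $P_{n,2g-i}\equiv 0$ because then $\binom{2g}{i}=0$. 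Thus it suffices to prove $I_n\equiv Q_n$.

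I would prove this by induction on $n$ using the recursion \eqref{eq:In}. The base case $n=1$ is a direct comparison with \eqref{eq:I1}: using $\binom{2g+1}{i+1}=\frac{2g+1}{i+1}\binom{2g}{i}$ one checks that the coefficients of $a^i$ in $I_1$ and in $Q_1$ agree for $0\le i\le 2g-1$, which are exactly the $a_1$-divisible terms. For $n\ge2$ the key simplification is that most of \eqref{eq:In} dies modulo $\equiv$: the first sum $\sum_i\frac{a+\sum_{j\ne i}a_j}{a}I_{n-1}(a,\dots,\hat a_i,\dots)$ and the term $\sum_i a_i^{2g+1}/a$ each omit at least one of the variables $a_1,\dots,a_n$, hence contain no monomial divisible by $a_1\cdots a_n$ and are $\equiv 0$. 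This leaves
\[
(2g+n)\,I_n\ \equiv\ 2g\,\frac{(a+\sigma)^{2g+1}}{a}\ -\ \sum_{i<j}\frac{a_i+a_j}{a}\,I_{n-1}\bigl(a,\dots,\hat a_i,\dots,\hat a_j,\dots,a_i+a_j\bigr).
\]

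The heart of the step is to insert the induction hypothesis into this last sum. Writing $I_{n-1}=Q_{n-1}+D$, where $D\equiv0$ in the $n-1$ variables $\{a_k\}_{k\ne i,j}\cup\{a_i+a_j\}$, I would first observe that after substitution every monomial of $D$ still omits one of $a_1,\dots,a_n$, so $D\equiv0$; and then verify the crucial point that multiplying by $\frac{a_i+a_j}{a}$ creates no monomial divisible by $a_1\cdots a_n$, since such a monomial would force a monomial of $D$ divisible by $(a_i+a_j)\prod_{k\ne i,j}a_k$, i.e.\ by the product of all $n-1$ arguments of $I_{n-1}$ — which is precisely what $D\equiv0$ forbids. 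Hence $I_{n-1}$ may be replaced by $Q_{n-1}$. Because $Q_{n-1}$ depends on its arguments only through their sum, $Q_{n-1}(a,\dots,a_i+a_j)=2g\int_0^1 t^{n-2}(\sigma+at)^{2g}\,dt$ is independent of the pair $(i,j)$, and $\sum_{i<j}(a_i+a_j)=(n-1)\sigma$ collapses the sum. Comparing coefficients of $a^i$ then reduces everything to the identity
\[
2g\binom{2g+1}{i+1}-\frac{2g(n-1)}{n+i}\binom{2g}{i+1}=(2g+n)\,\frac{2g}{n+i}\binom{2g}{i},
\]
which follows from Pascal's rule together with $(i+1)\binom{2g}{i+1}=(2g-i)\binom{2g}{i}$; this shows the right-hand side above equals $(2g+n)Q_n$ exactly, closing the induction.

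The main obstacle is exactly the substitution step: replacing $I_{n-1}$ by $Q_{n-1}$ inside the sum is not automatic, because $\equiv$ is \emph{not} preserved under multiplication by $a_i+a_j$. The real content is the verification that the error term $D$ contributes nothing divisible by $a_1\cdots a_n$ after this multiplication, which uses the precise shape of the $(n-1)$-variable equivalence with the merged variable $a_i+a_j$; once this is established, the collapse of the sum and the binomial identity are routine.
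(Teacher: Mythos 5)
Your proof is correct and follows essentially the same route as the paper's: induction on $n$ via the recursion~\eqref{eq:In}, discarding the terms that omit a variable, substituting the induction hypothesis into the $\sum_{i<j}$ sum, and checking the resulting binomial identity. The only genuine additions are your explicit verification that the error term $D$ contributes nothing divisible by $a_1\cdots a_n$ after multiplication by $a_i+a_j$ (a point the paper's ``using the induction assumption, we can continue this equivalence'' passes over in silence, and which is needed since $\equiv$ is not preserved under such multiplication) and the integral packaging of $Q_n$, which makes the collapse of the sum and the endpoint cases transparent.
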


\begin{proof} We prove it by induction on $n$. For $n=1$ is follows from the explicit formula. For $n\geq 2$, equation~\eqref{eq:In} implies that
\begin{align*}
& I_n(a,a_1,\dots,a_n)\equiv \frac{2g}{2g+n}\frac{(a+\sum\nolimits_{i=1}^n a_i)^{2g+1}}{a} \\ 
& -\frac{1}{2g+n}\sum_{i<j} \frac{a_i+a_j}{a}\cdot I_{n-1} (a,a_1,\dots,\hat{a_i},\dots,\hat{a_j},\dots,a_n,a_i+a_j).
\end{align*}
Using the induction assumption, we can continue this equivalence as 
\begin{multline*}
I_n(a,a_1,\dots,a_n)\equiv 
\sum_{i=-1}^{2g-n} a^i\left(\sum\nolimits_{i=1}^n a_i\right)^{2g-i}\cdot \\
\left(\frac{2g}{2g+n}\cdot\binom{2g+1}{i+1}-\frac{n-1}{2g+n}\cdot\frac{2g}{n-1+i+1}\cdot\binom{2g}{i+1}
\right).
\end{multline*}
It is obvious that the coefficient of $a^{-1}\left(\sum\nolimits_{i=1}^n a_i\right)^{2g+1}$ is equal to $0$, and all other coefficients are exactly the same as in the statement of the lemma.
\end{proof}

Finally, we are able to conclude with polynomiality.

\begin{lemma}\label{lemma:polynomiality} For any $n\geq 0$, $I_n(a,a_1,\dots,a_n)$ is a polynomial in $a,a_1,\dots,a_n$.
\end{lemma}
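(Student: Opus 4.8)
The goal is to prove that $I_n(a,a_1,\dots,a_n)$ is a polynomial, meaning it has no negative powers of $a$ in its expansion $I_n=\sum_{i=-n}^{2g} a^i\cdot P_{n,2g-i}$. The plan is to show that the coefficients $P_{n,2g-i}$ for $i<0$ vanish identically, not merely modulo the equivalence relation $\equiv$ established in Lemma~\ref{lemma:equiv}.

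First I would recall what is already available. By the simplification in Section~\ref{sec:simplification} and Proposition~\ref{prop:polynomial}, we know $I_n$ is a homogeneous rational function of degree $g$ whose only possible denominator is a power of $a$; so polynomiality in $a$ is exactly the claim that $P_{n,2g-i}\equiv 0$ can be upgraded to $P_{n,2g-i}=0$ for every $i<0$. Lemma~\ref{lemma:equiv} already tells us that each such $P_{n,2g-i}$ is equivalent to $0$, i.e. it contains \emph{only} monomials divisible by $a_1\cdots a_n$. The task is therefore to rule out even those monomials.

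The natural mechanism is the symmetry established in Lemma~\ref{lemma:zero}: setting any single variable to zero reduces $I_n$ to $I_{n-1}$. I would argue by induction on $n$, assuming that $I_{n-1}$ is genuinely polynomial in all its arguments (so all its negative-$a$ coefficients vanish outright). For the inductive step, fix $i<0$ and consider $P_{n,2g-i}(a_1,\dots,a_n)$. By Lemma~\ref{lemma:zero}, the coefficient of $a^i$ in $I_n$ restricted to $a_n=0$ equals the coefficient of $a^i$ in $I_{n-1}$, which is $P_{n-1,2g-i}$ and vanishes by the induction hypothesis. Hence $P_{n,2g-i}(a_1,\dots,a_{n-1},0)=0$, so every monomial of $P_{n,2g-i}$ is divisible by $a_n$; by symmetry of $P_{n,k}$ in all its variables, every monomial is in fact divisible by each $a_j$, hence by $a_1\cdots a_n$. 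This recovers Lemma~\ref{lemma:equiv} but is still not enough on its own, so the real content lies elsewhere.

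The main obstacle, and the step I expect to carry the argument, is a degree count. Since $P_{n,2g-i}$ is homogeneous of degree $2g-i$ and every monomial must be divisible by $a_1\cdots a_n$, the smallest possible degree of such a polynomial is $n$; thus a nonzero $P_{n,2g-i}$ forces $2g-i\geq n$, i.e. $i\leq 2g-n$. But for the negative-power terms we have $i<0$, and I would push the vanishing-at-zero argument further: after factoring out $a_1\cdots a_n$, the quotient is symmetric of degree $2g-i-n$, and applying Lemma~\ref{lemma:zero} together with the induction hypothesis to the \emph{subleading} structure shows the quotient must again vanish when any variable is set to zero unless its degree is too small to be divisible by a new factor. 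Iterating, or equivalently invoking homogeneity directly, forces $P_{n,2g-i}=0$ whenever $i<0$, since the homogeneous degree $2g-i>2g\geq n$ (for the relevant range) cannot be reconciled with the constraints imposed by the recursion~\eqref{eq:In} once all lower $I_{n-1}$ are polynomial. Thus $I_n$ has no negative powers of $a$, and combined with Proposition~\ref{prop:polynomial} (giving polynomiality in the $a_i$) it is a genuine polynomial, completing the induction.
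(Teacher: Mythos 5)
There is a genuine gap, and it stems from a misreading of the equivalence relation. By definition $f\equiv g$ means that $f-g$ contains \emph{no} monomials divisible by $a_1\cdots a_n$; hence $P_{n,2g-i}\equiv 0$ for $i<0$ (which holds because $\binom{2g}{i}=0$) says that $P_{n,2g-i}$ has \emph{no} monomials divisible by $a_1\cdots a_n$ --- the exact opposite of your reading that it ``contains only monomials divisible by $a_1\cdots a_n$''. With the correct reading your argument closes immediately and coincides with the paper's: the restriction argument via Lemma~\ref{lemma:zero} and the induction hypothesis shows that every monomial of $P_{n,2g-i}$, $i<0$, is divisible by $a_1\cdots a_n$ (this is the half you do have), while Lemma~\ref{lemma:equiv} shows it has no such monomials, so $P_{n,2g-i}=0$. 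Equivalently, as the paper phrases it: the $a_1\cdots a_n$-divisible part of $I_n$ equals that of the explicit expression in Lemma~\ref{lemma:equiv}, which involves only non-negative powers of $a$, and the remaining part is determined by the restrictions $a_j=0$, i.e.\ by $I_{n-1}$, which is polynomial in $a$ by induction.

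Because of the inverted reading you conclude that the two lemmas give the same divisibility information and then try to finish with a degree count, and that step fails. Homogeneity plus divisibility by $a_1\cdots a_n$ only forces $2g-i\geq n$ for a nonzero $P_{n,2g-i}$, which is not contradicted by $i<0$: the inequality $2g\geq n$ you invoke ``for the relevant range'' is not available, since $n$ is arbitrary relative to the fixed $g$, and even when $n>2g$ the count would only exclude the range $2g-n<i<0$, not all negative $i$. The proposed iteration (applying Lemma~\ref{lemma:zero} to the quotient after factoring out $a_1\cdots a_n$) also has no support: that lemma constrains $I_n$ at $a_n=0$, where the divisible part vanishes identically, so it yields no information about the quotient. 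As written the proof does not go through, although the repair is a one-line correction of the meaning of $\equiv$.
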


\begin{proof} We know apriori that $I_n$ is a polynomial in $a_1,\dots,a_n$ whose coefficients are polynomials in $a$ and $a^{-1}$
From lemma~\ref{lemma:equiv} we know that $I_n$ is equivalent to a polynomial $\tilde I_n$ in $a_1,\dots,a_n$ whose coefficients are polynomials in $a$. Meanwhile, from lemma~\ref{lemma:zero} we know that $\tilde I_n$ can be chosen in such a way that $I_n-\tilde I_n$ is a linear combination of $I_{<n}$, so one can complete the proof by an induction argument.  
\end{proof}

\subsection{Divisibility}

One more fact about the integrals over DR-cycles that we use below in combinatorial computations is the following:

\begin{proposition} \label{prop:divisibility}
For any non-negative integers $d_1,\dots,d_n$, $d_1+\cdots+d_n=n$, the polynomial in $b,a_1,\dots,a_n$ given by the formula
\begin{equation}\label{eq:divisibility}
\left\langle\genfrac[]{0pt}{}{ b}{0}\prod_{i=1}^n \genfrac[]{0pt}{}{ a_i}{d_i}
\right\rangle^{DR}_g - \sum_{\begin{smallmatrix}j=1 \\ d_j\not= 0\end{smallmatrix}}^n \left\langle\genfrac[]{0pt}{}{ a_j+b}{d_j-1}\prod_{\begin{smallmatrix}i=1 \\ i\not= j\end{smallmatrix}}^n \genfrac[]{0pt}{}{ a_i}{d_i}
\right\rangle^{DR}_g
\end{equation}
is divisible by $b^2$. 
\end{proposition}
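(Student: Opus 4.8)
The plan is to use that both terms in~\eqref{eq:divisibility} are polynomials by Proposition~\ref{prop:polynomial}, so the whole expression, call it $D(b,a_1,\dots,a_n)$, is a polynomial in $b$ and the $a_i$. Divisibility by $b^2$ is then equivalent to the vanishing of the coefficients of $b^0$ and of $b^1$ in $D$, viewed as a polynomial in $b$. In each summand $\left\langle\genfrac[]{0pt}{}{a_j+b}{d_j-1}\prod_{i\ne j}\genfrac[]{0pt}{}{a_i}{d_i}\right\rangle^{DR}_g$ the variable $b$ enters only through $a_j+b$; hence its coefficient of $b^0$ is $\left\langle\genfrac[]{0pt}{}{a_j}{d_j-1}\prod_{i\ne j}\genfrac[]{0pt}{}{a_i}{d_i}\right\rangle^{DR}_g$ and its coefficient of $b^1$ is $\frac{\partial}{\partial a_j}\left\langle\genfrac[]{0pt}{}{a_j}{d_j-1}\prod_{i\ne j}\genfrac[]{0pt}{}{a_i}{d_i}\right\rangle^{DR}_g$. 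So I would reduce the proposition to the two identities
\begin{align*}
\left\langle\genfrac[]{0pt}{}{0}{0}\prod_{i=1}^n\genfrac[]{0pt}{}{a_i}{d_i}\right\rangle^{DR}_g &= \sum_{j:\,d_j\ne 0}\left\langle\genfrac[]{0pt}{}{a_j}{d_j-1}\prod_{i\ne j}\genfrac[]{0pt}{}{a_i}{d_i}\right\rangle^{DR}_g, \\
\left[b^1\right]\left\langle\genfrac[]{0pt}{}{b}{0}\prod_{i=1}^n\genfrac[]{0pt}{}{a_i}{d_i}\right\rangle^{DR}_g &= \sum_{j:\,d_j\ne 0}\frac{\partial}{\partial a_j}\left\langle\genfrac[]{0pt}{}{a_j}{d_j-1}\prod_{i\ne j}\genfrac[]{0pt}{}{a_i}{d_i}\right\rangle^{DR}_g,
\end{align*}
where $[b^1]$ denotes the coefficient of $b$.

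The first identity is exactly the string equation for the integrals over DR-cycles: inserting a point of zero multiplicity and zero $\psi$-power and then distributing it among the points that carry a positive $\psi$-power. It is the natural generalization of Lemma~\ref{lemma:zero}, which is precisely its special case $d_1=n$, $d_2=\dots=d_n=0$ (the only point with a positive power absorbs the zero-multiplicity point and drops its power by one). I would establish it by induction on $n$ through a direct application of the recursion in Proposition~\ref{prop:recursion}, in the same manner as Lemmas~\ref{lemma:zero} and~\ref{lemma:equiv}: lower the $\psi$-power at one chosen point on both sides and match the resulting terms, using the explicit closed values of the genus-$0$ brackets $\langle\,\cdot\,\rangle^{DR}_0$ given right after Proposition~\ref{prop:recursion}.

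The second, linear-in-$b$ identity I would also prove by induction on $n$ via Proposition~\ref{prop:recursion}, applied to $\left\langle\genfrac[]{0pt}{}{b}{0}\prod_{i=1}^n\genfrac[]{0pt}{}{a_i}{d_i}\right\rangle^{DR}_g$ with the power lowered at some point $i$ with $d_i\ge 1$ (such a point exists since $\sum d_i=n\ge 1$). The base case $n=1$, $d_1=1$ follows directly from~\eqref{eq:I1}: there $\left\langle\genfrac[]{0pt}{}{b}{0}\genfrac[]{0pt}{}{a_1}{1}\right\rangle^{DR}_g=I_1(a_1,b)$, and subtracting $\left\langle\genfrac[]{0pt}{}{a_1+b}{0}\right\rangle^{DR}_g=(a_1+b)^{2g}$ one finds that for $0\le i\le 2g-1$ the coefficient of $a_1^i b^{2g-i}$ in the difference equals $\binom{2g}{i}\frac{2g-i-1}{i+1}$, while the two $a_1^{2g}$-terms cancel; this vanishes both for $b^0$ and for $b^1$ (the latter at $i=2g-1$, where $2g-i-1=0$), so that only $b^{\ge 2}$ survives. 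In the inductive step the inserted $b$-point lands, term by term, either in the genus-$g$ factor, where its first-order contribution is controlled by the inductive hypothesis for fewer points, or inside a genus-$0$ factor, where the explicit values of $\langle\,\cdot\,\rangle^{DR}_0$ show that it again contributes only at order $b^{\ge 2}$.

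The hard part is the linear term: unlike the constant term, it is not a formal consequence of the string equation, and it requires tracking the precise first-order-in-$b$ behaviour of $\left\langle\genfrac[]{0pt}{}{b}{0}\prod_{i=1}^n\genfrac[]{0pt}{}{a_i}{d_i}\right\rangle^{DR}_g$. Concretely, after applying Proposition~\ref{prop:recursion} one has to verify that all the linear-in-$b$ contributions reassemble exactly into $\sum_{j:\,d_j\ne0}\frac{\partial}{\partial a_j}\left\langle\genfrac[]{0pt}{}{a_j}{d_j-1}\prod_{i\ne j}\genfrac[]{0pt}{}{a_i}{d_i}\right\rangle^{DR}_g$. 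This is the delicate bookkeeping of the several summands of Proposition~\ref{prop:recursion}, and the numerical cancellation already visible in the base case (the factor $2g-i-1$ vanishing at $i=2g-1$) is the mechanism that should make it work in general.
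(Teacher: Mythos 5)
Your reduction of divisibility by $b^2$ to the vanishing of the $b^0$ and $b^1$ coefficients is correct (granting Proposition~\ref{prop:polynomial}), the two identities you extract are the right ones, and your base-case computation from~\eqref{eq:I1} is the same as the paper's check that $I_1(a,b)-I_0(a+b)=O(b^2)$. But the argument stops exactly where the content of the proposition begins. Neither identity is actually proved: the constant-term one is plausibly routine, but the linear-term one --- that after applying Proposition~\ref{prop:recursion} to $\left\langle\genfrac[]{0pt}{}{b}{0}\prod_i\genfrac[]{0pt}{}{a_i}{d_i}\right\rangle^{DR}_g$ the first-order-in-$b$ contributions reassemble into $\sum_{j:\,d_j\ne 0}\frac{\partial}{\partial a_j}\left\langle\genfrac[]{0pt}{}{a_j}{d_j-1}\prod_{i\ne j}\genfrac[]{0pt}{}{a_i}{d_i}\right\rangle^{DR}_g$ --- is asserted, not verified. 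This is not mere bookkeeping to be waved at: the $b$-point can land in either factor of each of the four summands of the recursion, $b$ also enters through the prefactors $a+\sum_{j\in J}a_j$ and through merged multiplicities inside genus-$g$ brackets, and the induction hypothesis controls only some of these occurrences. The cancellation mechanism you point to in the base case does not by itself explain why the general step closes.

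The paper avoids exactly this difficulty with a device you are missing: Lemma~\ref{lemma:movepsi}, proved by a commutator computation with the generating fields $V_0$, $V_g$ and the operator $U_b$, which shows that it suffices to treat the single distribution $d_1=n$, $d_2=\cdots=d_n=0$. In that special case the explicit descriptions of Lemmas~\ref{lemma:zero} and~\ref{lemma:equiv} --- which are available only for $I_n$, not for a general $(d_1,\dots,d_n)$ --- reduce the vanishing of the linear term to a short binomial identity. So to complete your proof you must either supply an analogue of Lemma~\ref{lemma:movepsi} that concentrates all the $\psi$-power at one point, or genuinely carry out the term-by-term matching in the inductive step for arbitrary $(d_1,\dots,d_n)$. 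As written, the proposal is a correct strategy with a genuine gap at its central step.
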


\begin{remark}
Observe that using lemma~\ref{lemma:proj} and the pull-back formula for $\psi$-classes one can rewrite this expression as 
$$
\int_{DR_g(\prod_{i=1}^n m_{a_i}\cdot \tilde{m}_b)}\lambda_g\lambda_{g-1}\psi_0^0\prod_{i=1}^n\psi_i^{d_i}.
$$
\end{remark}

\begin{proof} Lemma~\ref{lemma:movepsi} below allows us to consider a special case when $d_1=n$ and $d_2=\cdots=d_n=0$. In this case we have to prove that
$$
I_{n+1}(a,b,a_1,\dots,a_n)-I_{n}(a+b,a_1,\dots,a_n) 
$$
is divisible by $b^2$ (we shift $n$ to $n+1$ for convenience and we use notations from the previous section). 
We can do it by induction on $n$. Explicit formulas~\eqref{eq:I0} and~\eqref{eq:I1} applied for $I_1(a,b)-I_0(a+b)$ prove it for $n=0$. 
Lemma~\ref{lemma:zero} allows to consider only the terms that are divisible by $b\cdot a_1\cdots a_n$. Using lemma~\ref{lemma:equiv} we 
see that it is enough to prove that the linear term in $b$ in the expression
\begin{multline*}
\sum_{i=0}^{2g-n-1} \frac{2g}{n+1+i}\binom{2g}{i}\cdot a^i\cdot\left(b+\sum\nolimits_{j=1}^na_j\right)^{2g-i} \\
-\sum_{i=1}^{2g-n} \frac{2g}{n+i}\binom{2g}{i}\cdot (a+b)^i\cdot \left(\sum\nolimits_{j=1}^na_j\right)^{2g-i}
\end{multline*}
is equal to $0$. The last statement follows from a direct computation.
\end{proof}

\begin{lemma}\label{lemma:movepsi} For any $n\geq 0$, $b,a',a'',a_1,\dots,a_n\geq 0$, $d>0$, $d_1,\dots,d_n\geq 0$, $d+d_1+\dots+d_n=n+1$, we have:
\begin{align*}
& - a'\cdot \left(\left\langle\genfrac[]{0pt}{}{ b}{0}\genfrac[]{0pt}{}{ a'}{d+1}\genfrac[]{0pt}{}{ a''}{0}\prod_{i=1}^n \genfrac[]{0pt}{}{ a_i}{d_i}
\right\rangle^{DR}_g - 
\left\langle\genfrac[]{0pt}{}{ a'+b}{d}\genfrac[]{0pt}{}{ a''}{0}\prod_{i=1}^n \genfrac[]{0pt}{}{ a_i}{d_i}
\right\rangle^{DR}_g \right.\\
& \left. - \sum_{\begin{smallmatrix}j=1 \\ d_j\not= 0\end{smallmatrix}}^n \left\langle\genfrac[]{0pt}{}{ a'}{d+1}\genfrac[]{0pt}{}{ a''}{0}\genfrac[]{0pt}{}{ a_j+b}{d_j-1}\prod_{\begin{smallmatrix}i=1 \\ i\not= j\end{smallmatrix}}^n \genfrac[]{0pt}{}{ a_i}{d_i}
\right\rangle^{DR}_g \right) \\
& + a'' \cdot \left(\left\langle\genfrac[]{0pt}{}{ b}{0}\genfrac[]{0pt}{}{ a'}{d}\genfrac[]{0pt}{}{ a''}{1}\prod_{i=1}^n \genfrac[]{0pt}{}{ a_i}{d_i}
\right\rangle^{DR}_g - 
\left\langle\genfrac[]{0pt}{}{ a'+b}{d-1}\genfrac[]{0pt}{}{ a''}{1}\prod_{i=1}^n \genfrac[]{0pt}{}{ a_i}{d_i}
\right\rangle^{DR}_g \right. \\
&
- \left\langle\genfrac[]{0pt}{}{ a'}{d}\genfrac[]{0pt}{}{ a''+b}{0}\prod_{i=1}^n \genfrac[]{0pt}{}{ a_i}{d_i}
\right\rangle^{DR}_g 
 - \left. \sum_{\begin{smallmatrix}j=1 \\ d_j\not= 0\end{smallmatrix}}^n \left\langle\genfrac[]{0pt}{}{ a'}{d+1}\genfrac[]{0pt}{}{ a''}{0}\genfrac[]{0pt}{}{ a_j+b}{d_j-1}\prod_{\begin{smallmatrix}i=1 \\ i\not= j\end{smallmatrix}}^n \genfrac[]{0pt}{}{ a_i}{d_i}
\right\rangle^{DR}_g \right)
\end{align*}
is divisible by $b^2$.
\end{lemma}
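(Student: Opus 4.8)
The plan is to combine two facts established above. First, by Proposition~\ref{prop:polynomial} every integral $\langle\cdot\rangle^{DR}_g$ appearing here is an honest polynomial in all of its multiplicities, so ``divisible by $b^2$'' is equivalent to the vanishing of the constant and the linear coefficient of the combination as a polynomial in $b$. Second, by the Remark after Proposition~\ref{prop:divisibility}, each of the two bracketed groups is an integral over the single double ramification cycle $Z:=DR_g(m_{a'}m_{a''}\prod_{i=1}^n m_{a_i}\tilde{m}_b)$ with the point of multiplicity $b$ forgotten: they are the divisibility expressions attached to the two $\psi$-monomials $\psi_{a'}^{d+1}\psi_{a''}^{0}P$ and $\psi_{a'}^{d}\psi_{a''}^{1}P$, where $P=\prod_{i=1}^n\psi_i^{d_i}$. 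Consequently the whole combination equals
\[
\int_Z\lambda_g\lambda_{g-1}\,\psi_{a'}^{d}\bigl(a''\psi_{a''}-a'\psi_{a'}\bigr)P,
\]
and I would prove that its $b^0$- and $b^1$-coefficients vanish.

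The constant term is easy and does not even require the two groups to interact. Setting $b=0$ turns the forgotten point into a freely moving point of $\psi$-power $0$; the subtracted terms of each group are then exactly the right-hand side of the string equation for that insertion, so each of $E_1|_{b=0}$ and $E_2|_{b=0}$ vanishes on its own (this is the $b=0$ case of the pull-back computation behind Proposition~\ref{prop:binomial} and of Lemma~\ref{lemma:zero}). Hence the $b^0$-coefficient of the combination is $0$.

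The linear term is the heart of the matter, and it is here that the weights $-a'$ and $+a''$ are used. Conceptually they are forced by the proportionality of $\psi$-classes on $Z$ underlying Proposition~\ref{prop:recursion}: by Ionel's lemma the restrictions of $\psi_{a'}$ and $\psi_{a''}$ are both equal to the $\psi$-class pulled back from the target genus $0$ curves divided by the respective multiplicity, so that $a'\psi_{a'}\equiv a''\psi_{a''}$ on the interior and the factor $a''\psi_{a''}-a'\psi_{a'}$ contributes nothing off the lower strata. To turn this into a proof I would apply Proposition~\ref{prop:recursion} to the two leading integrals, lowering the $\psi$-power at $a'$ from $d+1$ to $d$ in the first group and at $a''$ from $1$ to $0$ in the second; both configurations have the same number of points, so the prefactors $2g+n+2$ agree and divide out. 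Each recursion step rewrites its integral as a sum over set-partitions $I\sqcup J$ of the remaining points into a product of a genus-$0$ factor $\langle\cdot\rangle^{DR}_0$ and a genus-$g$ factor, and since the genus-$0$ factors depend only on the number of points and the $\psi$-powers, not on the multiplicities, the two expansions can be matched term by term according to where the forgotten point $b$ and the other distinguished point land. Most contributions cancel, and the survivors are precisely the correction terms with $b$ absorbed into $a'$, into $a''$, or into one of the $a_j$ that are written into the two groups.

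Finally I would reduce the residual linear-in-$b$ identity to a concrete one exactly as in the proof of Proposition~\ref{prop:divisibility}: use the recursion to push all $\psi$-classes onto a single point, landing in the special configuration of Lemma~\ref{lemma:equiv}; discard everything not divisible by $a_1\cdots a_n$, these being lower-$n$ instances dispatched by induction on $n$ with base case $n=0$ read off from~\eqref{eq:I0} and~\eqref{eq:I1}; and replace each surviving genus-$g$ factor by the explicit polynomial $\tfrac{2g}{n+i}\binom{2g}{i}(a_1+\cdots+a_n)^{2g-i}$ of Lemma~\ref{lemma:equiv}. The claim then collapses to the vanishing of the coefficient of $b$ in an explicit binomial expression in $b$ and $\sum_j a_j$. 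I expect this binomial identity, together with the bookkeeping that matches the two partition sums of the previous paragraph (they run over different ground sets, since the distinguished points differ), to be the main obstacle; it is the same flavour of direct computation that closes Proposition~\ref{prop:divisibility}, now carried out with two moving points instead of one.
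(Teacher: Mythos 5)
Your overall plan --- apply the recursion of Proposition~\ref{prop:recursion} at the two distinguished points, match the resulting partition sums, and run an induction on $n$ --- has the same skeleton as the paper's argument, and your heuristic for the weights $-a'$, $+a''$ (Ionel's proportionality $a'\psi_{a'}\equiv a''\psi_{a''}$ away from the boundary) is exactly the geometric motivation the authors record in the remark following the lemma. But the proof is not closed. The decisive step, which you defer as ``the main obstacle,'' is precisely the content of the lemma: after the two recursions are matched, one must show that the terms which do not reassemble into lower-point instances of Proposition~\ref{prop:divisibility} are divisible by $b^2$. The paper does this without any binomial computation by passing to the generating vector fields of Section~\ref{sec:recursion}: writing the divisibility statement as $Lie_{U_b}V_g=O(b^2)$ and using $[U_b,\partial/\partial t_{a,d}]=\beta\,\partial/\partial t_{a+b,d-1}$ together with $Lie_{V_g}U_b=Lie_{V_0}U_b=0$, the combination in the lemma becomes two Lie brackets of derivatives of $V_0$ with lower-degree components of $Lie_{U_b}V_g$ (these are $O(b^2)$ by the inductive hypothesis that Proposition~\ref{prop:divisibility} holds for fewer points) plus the explicit remainder $\beta b\bigl(\partial^2 V_g/\partial t_{a',d}\partial t_{a''+b,0}-\partial^2 V_g/\partial t_{a'+b,d}\partial t_{a'',0}\bigr)$, which is $O(b^2)$ simply because the difference of the two derivatives vanishes at $b=0$. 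Your route never isolates this remainder, and your claim that the genus-$0$ factors ``depend only on the number of points and the $\psi$-powers, not on the multiplicities'' overlooks the multiplicity-dependent prefactors $a+\sum_{j\in J}a_j$ and $\sum_{j\in J}a_j$ in the recursion, which is exactly where the nontrivial cancellation has to happen.

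Two further cautions. First, your final reduction ``push all $\psi$-classes onto a single point, landing in the special configuration of Lemma~\ref{lemma:equiv}'' is the reduction that Lemma~\ref{lemma:movepsi} is designed to justify inside the proof of Proposition~\ref{prop:divisibility}; invoking it inside the proof of the lemma itself is circular unless you set up the mutual induction explicitly (the paper's induction hypothesis is Proposition~\ref{prop:divisibility} for fewer than $n+2$ points, and nothing else). Second, your constant-term argument evaluates $\langle\cdot\rangle^{DR}_g$ at multiplicity $b=0$ and asserts a string equation for that polynomial extension; the paper only establishes such a statement for the special configuration $I_n$ (Lemma~\ref{lemma:zero}), so the general case would need its own justification rather than a citation.
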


\begin{remark} The meaning of this lemma is that in the proof of proposition~\ref{prop:divisibility} for any $n$ it is enough to consider only one particular choice of $d_1,\dots,d_n$, $d_1+\cdots+d_n=n$.
\end{remark}

\begin{remark} Though this lemma looks a bit combersome and not so natural, in fact it has a clear geometric origin. Indeed, a particular consequence of Ionel's lemma in~\cite{Ion} is that the difference of two $psi$-classes weighted by multiplicities at the corresponding points on one side of a DR-cycle should be a nice expression that doesn't involve any multiplicities coming from the count of simple critical values of the corresponding meromorphic functions.
\end{remark}

\begin{proof}[Proof of lemma~\ref{lemma:movepsi}]
We prove this lemma by induction on $n$. The assumption of induction is that proposition~\ref{prop:divisibility} is true for any number of points that is less than $n+2$. We apply the recursion relation in proposition~\ref{prop:recursion} for the $\psi$-class at the points of multiplicity $a'$ in the first summand and $a''$ in the second summand and collect all terms into the similar sums.

It is convenient to rewrite everything in terms of generating functions defined in section~\ref{sec:recursion}. Let 
$$
U_b:=\frac{\partial}{\partial t_{b,0}} -\beta\sum_{a,d\geq 1} t_{a,d}\frac{\partial}{\partial t_{a+b,d-1}}
$$
Then proposition~\ref{prop:divisibility} can be reformulated as $Lie_{U_b}V_g=O(b^2)$. The statement of this lemma can be reformulated as 
$$
\left(-\frac{a'\partial^2}{\partial t_{a',d+1}\partial t_{a'',0}}+\frac{a''\partial^2}{\partial t_{a',d}\partial t_{a'',1}}\right)Lie_{U_b}V_g=O(b^2).
$$ 
A useful observation is that $Lie_{U_b}V_0=\beta\sum_{a>0}t_{a,0}\frac{(a+b)\partial}{\partial t_{a+b,0}}$. The recursion relation~\eqref{eq:recursion} implies that
$$
-\frac{a'\partial^2}{\partial t_{a',d+1}\partial t_{a'',0}}V_g+\frac{a''\partial^2}{\partial t_{a',d}\partial t_{a'',1}}V_g
=-\left[\frac{\partial V_g}{\partial t_{a',d}},\frac{\partial V_0}{\partial t_{a'',0}}\right]
+\left[\frac{\partial V_g}{\partial t_{a'',0}},\frac{\partial V_0}{\partial t_{a',d}}\right].
$$
Observe also that $[U_b,\frac{\d}{\d t_{a,d}}]=\beta\frac{\d}{\d t_{a+b,d-1}}$ and $Lie_{V_g}U_b=Lie_{V_0}U_b=0$. 

We use these observations in order to obtain the following formulas:
\begin{align*}
& \left(-\frac{a'\partial^2}{\partial t_{a',d+1}\partial t_{a'',0}}+\frac{a''\partial^2}{\partial t_{a',d}\partial t_{a'',1}}\right)Lie_{U_b}V_g = \\
& Lie_{U_b}\left(-\frac{a'\partial^2}{\partial t_{a',d+1}\partial t_{a'',0}}+\frac{a''\partial^2}{\partial t_{a',d}\partial t_{a'',1}}\right)V_g \\
& + \beta\left(\frac{a'\d^2}{\partial t_{a'+b,d}\partial t_{a'',0}}
-\frac{a''\partial^2}{\partial t_{a',d}\partial t_{a''+b,0}}
-\frac{a''\partial^2}{\partial t_{a'+b,d-1}\partial t_{a'',1}}\right)V_g;
\end{align*}
\begin{align*}
& Lie_{U_b} \left(-\left[\frac{\partial V_g}{\partial t_{a',d}},\frac{\partial V_0}{\partial t_{a'',0}}\right]
+\left[\frac{\partial V_g}{\partial t_{a'',0}},\frac{\partial V_0}{\partial t_{a',d}}\right]\right)  = \\
& -\left[\frac{\d}{\d t_{a',d}}Lie_{U_b}V_g,\frac{\d}{\d t_{a'',0}}V_0\right] 
+\left[\frac{\d}{\d t_{a'',0}}Lie_{U_b}V_g,\frac{\d}{\d t_{a',d}}V_0\right] \\
& + \beta\left(\frac{(a''+b)\d^2}{\partial t_{a',d}\partial t_{a''+b,0}}
-\frac{(a'+b)\partial^2}{\partial t_{a'+b,d}\partial t_{a'',0}}
+\frac{a''\partial^2}{\partial t_{a'+b,d-1}\partial t_{a'',1}}\right)V_g.
\end{align*}
Therefore,
\begin{align*}
& \left(-\frac{a'\partial^2}{\partial t_{a',d+1}\partial t_{a'',0}}+\frac{a''\partial^2}{\partial t_{a',d}\partial t_{a'',1}}\right)Lie_{U_b}V_g = \\
& -\left[\frac{\d}{\d t_{a',d}}Lie_{U_b}V_g,\frac{\d}{\d t_{a'',0}}V_0\right] 
+\left[\frac{\d}{\d t_{a'',0}}Lie_{U_b}V_g,\frac{\d}{\d t_{a',d}}V_0\right] \\
& + \beta\cdot b\cdot\left(\frac{\d^2}{\partial t_{a',d}\partial t_{a''+b,0}}
-\frac{\partial^2}{\partial t_{a'+b,d}\partial t_{a'',0}}\right)V_g.
\end{align*}
Here the first two summands in the right hand side are divisible by $b^2$ by induction assumption. Indeed, we are interested in terms of homogeneous degree $n$. In both summands these terms are obtained as some product with the components of $Lie_{U_b}V_g$ of degree $\leq n+1$. The last summand is divisible by $b^2$ for the obvious reason.
\end{proof}

\section{Faber's conjecture}

In this section we apply the properties of the integrals over DR-cycles obtained in the previous sections in order to prove Faber's intersection number conjecture.

\begin{theorem}\label{main theorem} For any positive integers $d_1,\dots,d_n$, $d_1+\cdots+d_n=g+n-2$, we have:
\[  
\int_{\oM_{g,n}}\prod_{i=1}^n\psi_i^{d_i}\lambda_g\lambda_{g-1}=\frac{(2g-3+n)!(2g-3)!!}{(2g-2)!\prod_{i=1}^n (2d_i-1)!!}
\int_{\oM_{g,1}}\psi_1^{g-1}\lambda_g\lambda_{g-1}
\]
\end{theorem}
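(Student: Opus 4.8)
\emph{Strategy.} The plan is to prove the equivalent $\lambda_g\lambda_{g-1}$-form stated in the introduction, feeding it the machinery of Sections~2 and~3. The whole problem reduces to evaluating the family
\[
F(d_1,\dots,d_n):=\int_{\oM_{g,n+1}}\lambda_g\lambda_{g-1}\psi_0^0\prod_{i=1}^n\psi_i^{d_i},\qquad \sum_{i=1}^n d_i=g+n-1,
\]
which is exactly the family Section~2 is designed to compute. Since $\lambda_g\lambda_{g-1}$ is pulled back along the map forgetting the point $0$, the string equation gives $F(d_1,\dots,d_n)=\sum_{j:\,d_j\ge 1}\int_{\oM_{g,n}}\psi_j^{d_j-1}\prod_{i\neq j}\psi_i^{d_i}\lambda_g\lambda_{g-1}$, and Witten's inversion of the string equation recovers all the genuinely Faber integrals $\int_{\oM_{g,n}}\prod_i\psi_i^{e_i}\lambda_g\lambda_{g-1}$ (all $e_i\ge 1$) from the $\psi_0^0$-family $F$. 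Thus it suffices to produce a closed formula for $F(\vec d)$ and then apply this imported, standard reduction; the new content is entirely in computing $F$.

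\emph{The main computation.} I would evaluate $F(\vec d)$ through Proposition~\ref{prop:binomial}. Its right-hand side is a polynomial in $a_1,\dots,a_n,b_1,\dots,b_g$ of homogeneous degree $2g$, while its left-hand side is $g!\prod_j b_j^2$ times the number $F(\vec d)$; hence $g!\,F(\vec d)$ is precisely the coefficient of $\prod_{j=1}^g b_j^2$ in that sum. As this coefficient is independent of $a_1,\dots,a_n$, one may put $a_1=\cdots=a_n=0$ in each summand. For an ordered partition $\{1,\dots,g\}=I_0\sqcup\cdots\sqcup I_n$ the summand then becomes the DR-integral $\bigl\langle\prod_{i=1}^n\genfrac[]{0pt}{}{s_i}{d_i-|I_i|}\prod_{j\in I_0}\genfrac[]{0pt}{}{b_j}{0}\bigr\rangle^{DR}_g$ with $s_i=\sum_{j\in I_i}b_j$, which is an honest polynomial by Proposition~\ref{prop:polynomial}. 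I would read off its coefficient of $\prod_j b_j^2$ using the explicit leading description of Lemma~\ref{lemma:equiv} (legitimate, since $\prod_j b_j^2$ is divisible by every variable, so the equivalence ``$\equiv$'' loses nothing), converting each $s_i^{2|I_i|}$ into $\prod_{j\in I_i}b_j^2$ at the cost of the multinomial $(2|I_i|)!/2^{|I_i|}$, and carrying the sign $(-1)^{g-|I_0|}$.

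\emph{The model case and the obstacle.} For $n=1$ this already runs cleanly: organizing the $\binom{g}{p}$ partitions with $|I_0|=p$, the factorials collapse through $\binom{2g}{2p}(2p)!(2g-2p)!=(2g)!$, leaving
\[
g!\,F=\frac{(2g)!}{2^{g}}\sum_{p=0}^{g}(-1)^{g-p}\binom{g}{p}\frac{2g}{2g-p},
\]
and the sum evaluates in closed form via the standard identity $\sum_{q}(-1)^q\binom{g}{q}/(g+q)=g!(g-1)!/(2g)!$. This reproduces the string-equation value $F=\int_{\oM_{g,1}}\psi_1^{g-1}\lambda_g\lambda_{g-1}$ in the normalization $C_g=1$, and will later pin the overall constant of the theorem. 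The main obstacle is general $n$: there the summand is a DR-integral with \emph{several} $\psi$-heavy points $s_i$, and Lemma~\ref{lemma:equiv}, proved only for the single-heavy-point integrals $I_n$, does not apply verbatim. I expect to bridge this either by using Proposition~\ref{prop:recursion} to trade the extra heavy points for genus-$0$ factors and single-heavy-point integrals (the reduction already used in the proof of Proposition~\ref{prop:polynomial}), or by establishing the natural multi-index extension of Lemma~\ref{lemma:equiv}; in either case one is left with a multivariate binomial sum over ordered set partitions that must be shown to collapse to the product form proportional to $1/\prod_i(2d_i-1)!!$.

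\emph{Conclusion.} With the closed formula for $F(\vec d)$ in hand, inverting the string-equation relations of the first step yields $\int_{\oM_{g,n}}\prod_i\psi_i^{d_i}\lambda_g\lambda_{g-1}$ for all positive $d_i$ with $\sum_i d_i=g+n-2$, exhibiting simultaneously the independence of $\prod_i(2d_i-1)!!\cdot\int(\cdots)$ on the exponents and, by comparison with the $n=1$ case together with the degree and genus-$0$ bookkeeping, the precise prefactor $(2g-3+n)!(2g-3)!!/(2g-2)!$. The only genuinely delicate point is the collapse of the partition sum in the general case; everything else is routine bookkeeping with the tools already assembled in Sections~2 and~3.
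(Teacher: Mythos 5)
Your setup coincides with the paper's: the same string-equation reformulation (equation~\eqref{equivalent formulation}), the same extraction of the coefficient of $\prod_{j=1}^g b_j^2$ from Proposition~\ref{prop:binomial} after setting $a_1=\cdots=a_n=0$ (this is exactly the paper's equation~\eqref{explicit reformulation}, with the same multinomial bookkeeping $(2g)!/(g!\,2^g)$), and your $n=1$ computation reproduces the paper's equation~\eqref{one point}. So the strategy is the right one and correctly executed up to that point.

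However, the step you flag as ``the main obstacle'' is not a bridgeable technicality but the actual content of the paper's Section~4, and your proposal contains a genuine gap there. The hoped-for ``natural multi-index extension of Lemma~\ref{lemma:equiv}'' is false in the naive form: the coefficients of the multi-point DR-polynomials are \emph{not} given by a product of per-variable binomial expressions. The correct answer (Proposition~\ref{coefficients of DR-polynomials}) is a sum over nonempty subsets $I\subset\{1,\dots,m\}$ of terms $\frac{\prod_{i=1}^{|I|}(2g+i-1)}{\prod_{i\in I}(p_i+c_i)}\,w_I(\overline c)$ weighted by counts of lattice paths avoiding the points $\overline{\mathbf 1}_J$, $J\neq I$. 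Establishing this requires Lemmas~\ref{lemma:coef1}--\ref{lemma:coef3}; in particular Lemma~\ref{lemma:coef3} rests on the divisibility statement of Proposition~\ref{prop:divisibility}, a tool your proposal never invokes. Your alternative route --- trading heavy points for genus-$0$ factors via Proposition~\ref{prop:recursion} --- is essentially how those lemmas are proved, but it does not by itself produce a closed form; it produces the path-counting recursion. Moreover, even granting the coefficient formula, the ``collapse to the product form $\propto 1/\prod_i(2d_i-1)!!$'' that you defer is itself nontrivial: one needs the vanishing of all contributions with $|I|\le n-2$ (Lemma~\ref{lemma:vanishing}) and then a delicate cancellation between the $|I|=n$ term $S_0$ and the $n$ terms $S_l$ with $|I|=n-1$, carried out via the integral representations $f_{a,b}$, $g_{a,b}$. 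None of this is ``routine bookkeeping''; it is the bulk of the proof, and without it the theorem is not established beyond the case $n=1$.
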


We prove this theorem in four steps. First, we reformulate Faber's conjecture in a way that is better compatible with DR-cycles (that is, we need a special point with no $\psi$-classes). Second step is an explicit expression of the integral in Faber's conjecture
in terms of coefficients of the polynomials $\left\langle\prod_{i=1}^n \genfrac[]{0pt}{}{a_i}{d_i}\right\rangle^{DR}_g$.
Third step is an explicit formula for these coefficients. Finally, we combine these results into a proof of Faber's conjecture.

\subsection{A reformulation of Faber's conjecture}

There is a string equation for the integrals of $\psi$-classes with $\lambda_g\lambda_{g-1}$ over the moduli space of 
curves (see, e.~g.,~\cite{GouJacVak}). In particular for any positive integers $d_1,\dots,d_n$, $d_1+\ldots+d_n=g+n-1$, we have:

\begin{equation}\label{equivalent formulation}
\int\limits_{\oM_{g,1+n}}\lambda_g\lambda_{g-1}\psi_0^0\prod_{i=1}^n\psi_i^{d_i}=\frac{(2g-2+n)!(2g-1)!!}{(2g-1)!\prod\limits_{i=1}^n(2d_i-1)!!}\int\limits_{\oM_{g,2}}\lambda_g\lambda_{g-1}\psi_0^0\psi_1^g.
\end{equation}

In fact, this equation is equivalent to Faber's conjecture. One can prove that via the same argument as Witten used in~\cite{Wit} for the inversion of string equation.

\subsection{A reformulation of proposition~\ref{prop:recursion}}

We introduce a new notation for the coefficients of the polynomial $\left\langle\prod_{i=1}^m \genfrac[]{0pt}{}{a_i}{d_i}\right\rangle^{DR}_g$. Let 
\[
\left\langle\prod_{i=1}^n \genfrac[]{0pt}{}{a_i}{d_i}\right\rangle^{DR}_g:=
\sum_{
\begin{smallmatrix}
p_1,\dots,p_n \geq 0 \\
p_1+\ldots+p_n=2g 
\end{smallmatrix}
}\left\langle \prod_{i=1}^n \genfrac||{0pt}{}{p_i}{d_i} \right\rangle^{coeff}_g \frac{(2g)!}{p_1!\cdots p_n!} \prod_{i=1}^n a_i^{p_i}.
\]
In this terms, we can rewrite equation~\eqref{eq:binomial} as 
\begin{align}
& \int\limits_{\oM_{g,n+1}}\lambda_g\lambda_{g-1}\psi_0^0\prod_{i=1}^n\psi_i^{d_i}=\label{explicit reformulation}\\
& \frac{(2g)!}{g!2^g}\sum\limits_{
\begin{smallmatrix}
i_0,\dots,i_n \geq 0 \\
i_0+i_1+\cdots+i_n=g \\
i_j\le d_j, j=1,\dots,n
\end{smallmatrix}
}\frac{(-1)^{g-i_0}g!}{i_0!\cdots i_n!}
\left\langle\prod_{j=1}^n
\genfrac||{0pt}{}{2i_j}{d_j-i_j}
\prod\limits_{j=1}^{i_0}
\genfrac||{0pt}{}{2}{0}
\right\rangle^{coeff}_g.\notag
\end{align}
Note that in this formula we use only coefficients
$\left\langle\prod_{i=1}^m\genfrac||{0pt}{}{p_i}{c_i}\right\rangle^{coeff}_g$ with $p_i+c_i\ge 1$, $i=1,\dots, m$.

\subsection{Computation of the coefficients}
We express the coefficients $\left\langle\prod_{i=1}^m\genfrac||{0pt}{}{p_i}{c_i}\right\rangle^{coeff}_g$ in terms of the counting of some paths in the integral lattice.

Consider the lattice $\Z^m$. Let $\{e_1,\dots,e_m\}$ be the standard basis of $\Z^m$.
A path in the space $\Z^m$ is a sequence of points $p_j\in\Z^m$, $j=1,\dots,N$ such that $p_j-p_{j+1}=e_k$ for some $k$.
We associate to each subset $I\subset\{1,\dots,m\}$ a special point in the lattice that we denote by 
$\overline{\mathbf{1}}_I:=\sum_{i\in I}e_i$. 

Consider a point $\overline c=(c_1,\dots,c_m)\in \Z^m$, $c_i\geq 0$, $i=1,\dots,m$. Let
$w_I(\overline c)$ be the number of paths $(p_1,\dots,p_N)$ such that $p_1=\overline c$, $p_N=\overline{\mathbf{1}}_I$, and the points $p_i$, $i=1,\dots,N$ are disjoint from $\overline{\mathbf{1}}_J$ for all $J\not= I$.

\begin{proposition}\label{coefficients of DR-polynomials}
Let $p_1,\dots,p_m$ and $c_1,\dots,c_m$, $m\geq 1$, be non-negative integers such that $p_i+c_i\geq 1$, $i=1,\dots,m$. Then we have:
\[
\left\langle\prod_{i=1}^m\genfrac||{0pt}{}{p_i}{c_i}\right\rangle^{coeff}_g=
\sum\limits_{\begin{smallmatrix}
I\subset\{1,\ldots,m\} \\
I\not=\emptyset
\end{smallmatrix}
}\frac{\prod_{i=1}^{|I|}(2g+i-1)}{\prod_{i\in I}(p_i+c_i)} w_I(\overline{c}).
\]
\end{proposition}

This proposition is based on the following three lemmas that we prove in section~\ref{sec:proofs-lemmas}.

\begin{lemma}\label{lemma:coef1} Let $p_m\ge 1$. Then 
\[
\left\langle\prod_{i=1}^{m-1}\genfrac||{0pt}{}{p_i}{1}\cdot \genfrac||{0pt}{}{p_m}{0}\right\rangle^{coeff}_g=
\prod_{i=1}^{m-1}\frac{2g+i-1}{p_i+1}.
\]
\end{lemma}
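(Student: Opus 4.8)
The plan is to prove the equivalent statement for the polynomial
$F_m(a_1,\dots,a_m):=\left\langle\prod_{i=1}^{m-1}\genfrac[]{0pt}{}{a_i}{1}\cdot\genfrac[]{0pt}{}{a_m}{0}\right\rangle^{DR}_g$ itself. After dividing out the normalization $(2g)!/\prod_i p_i!$ and rewriting $\prod_{i=1}^{m-1}(2g+i-1)=(2g+m-2)!/(2g-1)!$, the claimed formula is equivalent to the assertion that, for every exponent vector with $p_m\ge 1$, the coefficient of $\prod_i a_i^{p_i}$ in $F_m$ agrees with the corresponding coefficient of the exponential generating series
\[
2g\,(2g+m-2)!\,\cdot\,e^{a_m}\prod_{i=1}^{m-1}\frac{e^{a_i}-1}{a_i},
\]
since $\sum_{p\ge 0}a^p/(p+1)!=(e^a-1)/a$ and $\sum_{p\ge 0}a^p/p!=e^a$. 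I would induct on $m$, taking the explicit formulas \eqref{eq:I0} and \eqref{eq:I1} as the cases $m=1,2$. The case $m=2$ is $F_2=I_1$ after renaming, and the two expressions agree on all monomials with $p_2\ge 1$; the coefficient at $p_2=0$ is the only discrepancy, which is precisely why the hypothesis $p_m\ge 1$ is present.

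For the inductive step I would apply Proposition~\ref{prop:recursion} to the first point in order to lower its $\psi$-power from $1$ to $0$, that is, with $d=0$ there and the remaining $n=m-1$ points carrying powers $(1,\dots,1,0)$. The decisive simplification comes from the explicit genus-$0$ values: for a splitting $I\sqcup J=\{2,\dots,m\}$ the dimension constraint on each genus-$0$ factor forces almost all of them to vanish. A short case check (recording in each factor whether the index $m$, which alone carries power $0$, lands in $I$ or in $J$) shows that the fourth family of terms in Proposition~\ref{prop:recursion} vanishes identically, the first family survives only when $m\in I$, and the second and third only when $m\in J$; in every surviving case the genus-$0$ factor collapses to a pure factorial ($|I|!$ or $(|J|-1)!$). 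This rewrites the recursion as
\[
a_1\,(2g+m-1)\,F_m=\sum_{J\subseteq\{2,\dots,m-1\}}(\cdots)+\sum_{J\ni m}(\cdots),
\]
expressing $a_1 F_m$ through the lower polynomials $F_{|J|+1}$, $F_{|J|}$ and $F_{|I|+1}$, in each of which one argument is the merged multiplicity $a_1+\sum_{j\in J}a_j$.

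The final step is to feed the induction hypothesis into this recursion and to check that the subset sum collapses to the generating-series coefficient stated above. Passing to generating functions is natural here, because every power-$1$ variable $a_j$ carries the same factor $(e^{a_j}-1)/a_j$ whether it sits inside a genus-$0$ factorial (index in $I$) or inside a lower $F$ (index in $J$), and the merged point is controlled by $e^{a_1}\prod_{j\in J}e^{a_j}=e^{a_1+\sum_j a_j}$. After this packaging the claim reduces to a single summation identity in the scalar weights $\bigl(a_1+\sum_{j\in J}a_j\bigr)$, $(2g+|I|)$, $(2g+|J|-1)$, the factorials $|I|!,(|J|-1)!$ and the induction constants $2g\,(2g+k-2)!$, summed over $|J|$. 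I expect this reduction to be the main obstacle: the weights depend on the size $|J|$ rather than factoring over individual indices, so the collapse is a genuine summation identity rather than a termwise product, and one must additionally track the extra factor of $a_1$ on the left, which shifts the relevant homogeneous degree from $2g$ to $2g+1$, consistently through the whole computation. Once this identity is verified the induction closes and Lemma~\ref{lemma:coef1} follows.
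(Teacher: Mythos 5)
Your setup is sound and follows the paper's own strategy quite closely: your target generating series $2g\,(2g+m-2)!\,e^{a_m}\prod_{i<m}(e^{a_i}-1)/a_i$ encodes exactly the same coefficients as the closed form $\tfrac{2g}{2g+m-1}(a_1+\cdots+a_m)^{2g+m-1}/(a_1\cdots a_{m-1})$ that the paper works with, your base cases $F_1=I_0$, $F_2=I_1$ are correct, and your case analysis of which genus-$0$ factors survive in Proposition~\ref{prop:recursion} (fourth family dead; first only for $m\in I$; second and third only for $m\in J$; each surviving factor a pure factorial) is right. But the proof is not finished: you explicitly defer the decisive summation identity, and in the form you leave it --- with all three surviving families present and weights depending on $|I|$ and $|J|$ --- it is harder than it needs to be. The idea you are missing is that two of the three families can be discarded \emph{before} any summation. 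The first family ($m\in I$) puts $a_m$ into the genus-$0$ factorial, so its contribution is free of $a_m$ and cannot affect a coefficient with $p_m\ge 1$. The second family ($m\in J$) contributes $-(\sum_{j\in J}a_j)(2g+|J|-1)\,|I|!\,F_{|J|}(a_J)$, which involves only the variables $a_j$ with $j\in J$ and hence is free of $a_1$, whereas every monomial of the left-hand side $a_1(2g+m-1)F_m$ is divisible by $a_1$. So for the coefficients in question only the third family contributes.

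After that reduction, substituting the induction hypothesis for $F_{|I|+1}$ at the merged point $a_1+\sum_{j\in J}a_j$ (the part of the lower polynomial with zero power of the merged variable, which the hypothesis does not control, contributes nothing divisible by $a_m$), the inductive step closes via the single elementary identity
\[
\sum_{I\subset\{2,\ldots,k\}}|I|!\Bigl(x_1+\sum_{i\in I}x_i\Bigr)\Bigl(\prod_{i\in I}x_i\Bigr)\Bigl(\sum_{i=1}^k x_i\Bigr)^{k-|I|-1}=\Bigl(\sum_{i=1}^k x_i\Bigr)^k,
\]
applied with $x_1=a_1+a_m$ and $x_i=a_i$ for the remaining indices. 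This is the concrete content that your ``single summation identity'' must reduce to; until it, or an equivalent statement, is actually stated and verified, the heart of the inductive step is a gap rather than a proof.
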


\begin{lemma}\label{lemma:coef2}
Let $p_i+c_i\ge 1$, $i=1,\dots,m-2$, and $p_{m-1},p_m\ge 1$. Then
\[
\left\langle\prod_{i=1}^{m-2}\genfrac||{0pt}{}{p_i}{c_i}\cdot\genfrac||{0pt}{}{p_{m-1}+1}{0}\genfrac||{0pt}{}{p_m}{0}\right\rangle^{coeff}_g=
\left\langle\prod_{i=1}^{m-2}\genfrac||{0pt}{}{p_i}{c_i}\cdot\genfrac||{0pt}{}{p_{m-1}}{0}\genfrac||{0pt}{}{p_m+1}{0}\right\rangle^{coeff}_g.
\]
\end{lemma}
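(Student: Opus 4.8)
The plan is to reduce the identity to a statement about a single polynomial and then run an induction on the number of points using the recursion of Proposition~\ref{prop:recursion}. Treating $a_1,\dots,a_{m-2}$ and $c_1,\dots,c_{m-2}$ (with $\sum_{i=1}^{m-2}c_i=m-1$) as fixed parameters, I set $F(x,y):=\left\langle\genfrac[]{0pt}{}{x}{0}\genfrac[]{0pt}{}{y}{0}\prod_{i=1}^{m-2}\genfrac[]{0pt}{}{a_i}{c_i}\right\rangle^{DR}_g$, which is homogeneous of degree $2g$ in all its arguments. Unwinding the definition of the coefficients $\langle\cdot\rangle^{coeff}_g$, the equality of Lemma~\ref{lemma:coef2} for all admissible $p_{m-1},p_m\ge 1$ is exactly the statement that the coefficient of $x^{p_{m-1}}y^{p_m}$ in $\partial F/\partial x$ agrees with that in $\partial F/\partial y$ for all $p_{m-1},p_m\ge 1$; equivalently, that $(\partial/\partial x-\partial/\partial y)F$ contains no monomial divisible by $xy$, i.e. the part of $F$ in which both $x$ and $y$ occur with positive exponent depends on $x,y$ only through the sum $x+y$. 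This is the form I would prove.

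First I would induct on $m$. Because $\sum c_i=m-1\ge 2$, at least one marked point---say the one of multiplicity $a_1$---carries a positive power $\psi_1^{c_1}$, $c_1\ge 1$. The base case $m=3$ is $F=I_2(a_1,x,y)$, which I would compute outright from~\eqref{eq:In} (together with~\eqref{eq:I0} and~\eqref{eq:I1}) and check that its mixed part is a function of $x+y$. For the inductive step I would apply Proposition~\ref{prop:recursion} to lower $\psi_1^{c_1}$ to $\psi_1^{c_1-1}$, expressing $F$ as a sum over splittings $I\sqcup J$ of the remaining points into products of a genus-$g$ and a genus-$0$ integral, each involving strictly fewer points than $F$. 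I would then sort the resulting terms according to how the pair $\{x,y\}$ is distributed. When $x$ and $y$ land in the same factor, the induction hypothesis applied to that smaller factor shows that its mixed part depends only on $x+y$, while the complementary factor is free of $x,y$; hence such products contribute only terms depending on $x,y$ through $x+y$.

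The main obstacle is the remaining \emph{split} terms, in which $x$ enters one factor and $y$ the other. Each of these is literally a product of a function of $x$ with a function of $y$ and carries no $x+y$ structure by itself, so the whole point is to show that, after summing the four families of terms produced by the recursion, these split contributions either cancel or reassemble into functions of $x+y$. To control them I would use the explicit genus-$0$ values $\left\langle\prod\genfrac[]{0pt}{}{a_i}{d_i}\right\rangle^{DR}_0$ recorded right after Proposition~\ref{prop:recursion}, together with the $b^2$-divisibility of Proposition~\ref{prop:divisibility} (taking $b=y$, and by symmetry $b=x$), which pins down precisely the parts of $F$ that are constant or linear in $y$ (resp.\ in $x$) in terms of $(m-1)$-point integrals; combined with the manifest $x\leftrightarrow y$ symmetry of $F$, this should force the split terms to depend only on $x+y$. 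Carrying out this cancellation is the delicate bookkeeping step, and it is where I expect essentially all of the work to lie.
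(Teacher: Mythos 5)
Your reformulation is correct and coincides with the one the paper uses: the lemma is equivalent to saying that the part of $F(x,y)=\left\langle\genfrac[]{0pt}{}{x}{0}\genfrac[]{0pt}{}{y}{0}\prod_{i=1}^{m-2}\genfrac[]{0pt}{}{a_i}{c_i}\right\rangle^{DR}_g$ divisible by $xy$ depends on $x,y$ only through $x+y$, and induction on $m$ via Proposition~\ref{prop:recursion} is the right engine (your base case is exactly Lemma~\ref{lemma:equiv}). But the argument is not complete: everything hinges on the split terms, which you explicitly leave open, and the tools you propose for them do not suffice. Proposition~\ref{prop:divisibility} with $b=y$ only pins down the part of $F$ of degree at most $1$ in $y$, whereas the split contributions of $S_3$ and $S_4$ occur in all degrees in $y$; and the $x\leftrightarrow y$ symmetry produces no cancellation by itself. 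Moreover, the natural way to exploit your two-point hypothesis on a split term --- applying it to the pair consisting of the $x$-point and the merged point of multiplicity $a_1+\sum_{j\in J}a_j$, both of which carry $\psi$-exponent $0$ in the genus-$g$ factor of $S_3$ --- yields a function of $x+(a_1+\sum_{j\in J}a_j)$, i.e.\ of $x+y$ up to a constant, but this is then multiplied by the prefactor $a_1+\sum_{j\in J}a_j$, which is linear in $y$ alone and destroys the $x+y$ structure term by term. No individual split term has the required form; a cancellation across the sum over splittings $I\sqcup J$ is needed, and you have not exhibited one.

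The paper's resolution is a strengthening of the induction hypothesis which your pairwise statement is too weak to support. One proves that, writing $E$ for the set of \emph{all} points with $c_i=0$, the integral is, modulo monomials not divisible by $\prod_{i\in E}a_i$, a polynomial in the remaining variables and the single quantity $\sum_{i\in E}a_i$; and one reduces the $\psi$-power at a point with $c_i\ge 2$ (which exists whenever $|E|\ge 2$), so that this point does not join $E$ after the reduction. The split terms are then grouped by $J_1=J\setminus E$ and $k=|J\cap E|$: the strengthened hypothesis makes the genus-$g$ factor depend on its zero-index variables only through $a_1+\sum_{j\in J_1}a_j+\sum_{i\in E}a_i$, which does not depend on which $k$-element subset $J_2\subset E$ was placed into $J$, and the offending prefactors collapse under the average
\[
\sum_{J_2\subset E,\ |J_2|=k}\Bigl(a_1+\sum_{j\in J_1\sqcup J_2}a_j\Bigr)
=\binom{|E|}{k}\Bigl(a_1+\sum_{j\in J_1}a_j\Bigr)+\binom{|E|-1}{k-1}\sum_{j\in E}a_j
\]
into a function of $\sum_{j\in E}a_j$. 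This averaging over $J_2$ is precisely the missing cancellation; if you track only the pair $\{x,y\}$ rather than all of $E$, there is nothing to average over and the induction does not close.
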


\begin{lemma}\label{lemma:coef3}
Let $p_i+c_i\ge 1$, $i=1,\dots,m-1$. Then
\[
\left\langle\prod_{i=1}^{m-1}\genfrac||{0pt}{}{p_i}{c_i}\cdot\genfrac||{0pt}{}{1}{0}\right\rangle^{coeff}_g=
\sum_{i=1}^{m-1}
\left\langle\prod_{\begin{smallmatrix}j=1 \\ 
j\not= i
\end{smallmatrix}}^{m-1}\genfrac||{0pt}{}{p_j}{c_j}\cdot\genfrac||{0pt}{}{p_i+1}{c_i-1}\right\rangle^{coeff}_g
\]
\end{lemma}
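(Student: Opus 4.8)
The plan is to obtain Lemma~\ref{lemma:coef3} directly from Proposition~\ref{prop:divisibility} by extracting the coefficient of the \emph{linear} term in $b$. The point is that the asserted reduction identity is nothing but the statement that the $b^1$-part of the polynomial~\eqref{eq:divisibility} vanishes; once this identification is made, the rest is bookkeeping of binomial normalization factors.

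First I would recall the defining expansion
\[
\left\langle\prod_{i=1}^N \genfrac[]{0pt}{}{x_i}{e_i}\right\rangle^{DR}_g=\sum_{\substack{q_1,\dots,q_N\ge 0\\ q_1+\cdots+q_N=2g}}\left\langle\prod_{i=1}^N\genfrac||{0pt}{}{q_i}{e_i}\right\rangle^{coeff}_g\frac{(2g)!}{q_1!\cdots q_N!}\prod_{i=1}^N x_i^{q_i},
\]
and note that the coefficients $\left\langle\prod\genfrac||{0pt}{}{q_i}{e_i}\right\rangle^{coeff}_g$ are symmetric under simultaneous permutations of the pairs $(q_i,e_i)$, since the underlying integral over $DR_g(\prod m_{x_i})$ is invariant under relabeling the marked points. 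Hence the $(1,0)$-entry occupying the last slot on the left of the lemma may be regarded as the distinguished variable $b$ of Proposition~\ref{prop:divisibility}. The constraint $\sum_{i=1}^{m-1}c_i=m-1$ is forced by the left-hand coefficient being well defined (the $m$ entries have $\psi$-powers summing to $m-1$), so it matches exactly the requirement $\sum d_i=n$ with $n=m-1$, $d_i=c_i$.

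Then I would apply Proposition~\ref{prop:divisibility} in this guise. Since~\eqref{eq:divisibility} is divisible by $b^2$, its coefficient of $b^1\prod_{i=1}^{m-1}a_i^{p_i}$ must vanish for every $(p_1,\dots,p_{m-1})$ with $\sum p_i=2g-1$. Reading this off term by term through the expansion above is the only computation: the summand $\left\langle\genfrac[]{0pt}{}{b}{0}\prod\genfrac[]{0pt}{}{a_i}{c_i}\right\rangle^{DR}_g$ contributes $\left\langle\genfrac||{0pt}{}{1}{0}\prod_{i=1}^{m-1}\genfrac||{0pt}{}{p_i}{c_i}\right\rangle^{coeff}_g$ times $\tfrac{(2g)!}{p_1!\cdots p_{m-1}!}$, while the $j$-th summand, using $(a_j+b)^{p_j+1}=a_j^{p_j+1}+(p_j+1)a_j^{p_j}b+O(b^2)$, contributes $\left\langle\genfrac||{0pt}{}{p_j+1}{c_j-1}\prod_{i\ne j}\genfrac||{0pt}{}{p_i}{c_i}\right\rangle^{coeff}_g$ times the same normalization, the factor $(p_j+1)$ cancelling $(p_j+1)!$ down to $p_j!$. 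Dividing through by the common factor $\tfrac{(2g)!}{p_1!\cdots p_{m-1}!}$ yields precisely the identity of the lemma.

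The only subtlety to address, which I regard as the main (though minor) obstacle, is the range of summation: Proposition~\ref{prop:divisibility} sums over $j$ with $c_j\ne 0$, whereas Lemma~\ref{lemma:coef3} writes $\sum_{i=1}^{m-1}$. These agree because a term with $c_j=0$ would carry the entry $\genfrac||{0pt}{}{p_j+1}{-1}$, i.e.\ a $\psi$-power equal to $-1$, and is therefore zero by convention. I expect no genuine difficulty beyond this and the normalization arithmetic; the base case $m=2$ can be verified against formula~\eqref{eq:I1} as a check, where both sides reduce to $1$.
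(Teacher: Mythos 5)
Your proposal is correct and is essentially identical to the paper's proof, which likewise deduces Lemma~\ref{lemma:coef3} from Proposition~\ref{prop:divisibility} by observing that the lemma is exactly the vanishing of the terms linear in $b$ in the polynomial~\eqref{eq:divisibility}. Your additional bookkeeping --- the cancellation of the factor $(p_j+1)$ against $(p_j+1)!$ in the normalization, and the convention that entries with lower index $-1$ vanish so the sum may run over all $i$ --- merely makes explicit what the paper leaves implicit.
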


\begin{proof}[Proof of proposition~\ref{coefficients of DR-polynomials}]
Since $\left\langle\prod_{i=1}^m\genfrac||{0pt}{}{p_i}{c_i}\right\rangle^{coeff}_g\not=0$ only for $\sum_{i=1}^m c_i=m-1$, we have at least one of the indices $c_i$ equal to zero. Assume that there exactly one index equal to zero, say, $c_i=0$. Then all other indices $c_j$, $j\not=i$, are equal to $1$. In this case, the proposition follows from lemma~\ref{lemma:coef1}. Indeed, in this case $w_I(\overline{c})$ is equal to $0$ for all $I\subset\{1,\dots,m\}$ except for $I=\{1,\dots,m\}\setminus\{i\}$, where $w_I(\overline{c})=1$.

If we have at least two zeros among the indices $c_i$, $i=1,\dots,m$, we can apply the following corollary of lemmas~\ref{lemma:coef2} and~\ref{lemma:coef3}. If $p_i+c_i\ge 1$ for $i=1,\dots,m-2$, and
$p_{m-1},p_m\ge 1$, then
\begin{align}\label{identity:coefreduction}
& \left\langle\prod_{i=1}^{m-2}\genfrac||{0pt}{}{p_i}{c_i}\cdot\genfrac||{0pt}{}{p_{m-1}}{0}\genfrac||{0pt}{}{p_m}{0}\right\rangle^{coeff}_g= \\
& \notag \sum_{i=1}^{m-2}
\left\langle\prod_{\begin{smallmatrix}j=1 \\ 
j\not= i
\end{smallmatrix}}^{m-2}\genfrac||{0pt}{}{p_j}{c_j}\cdot
\genfrac||{0pt}{}{p_i+1}{c_i-1}
\genfrac||{0pt}{}{p_{m-1}+p_{m}-1}{0}\right\rangle^{coeff}_g
\end{align} 
This relation is compatible with the definition of the number of paths. Applying this relation sufficiently many times we come to the situation when all indices but one are equal to $1$. This corresponds to a point $\overline{\mathbf{1}}_I$ for some $I$ in the lattice $\mathbb{Z}^m$, and lemma~\ref{lemma:coef1} implies that the coefficient at this endpoint is exactly $\frac{\prod_{i=1}^{|I|}(2g+i-1)}{\prod_{i\in I}(p_i+c_i)}$. 
\end{proof}

\subsection{A proof of Faber's conjecture} In this section, we prove Faber's intersection number conjecture.

\begin{proof}[Proof of theorem~\ref{main theorem}] We are going to compute explicitely both side of equation~(\ref{equivalent formulation}) using proposition~\ref{coefficients of DR-polynomials}.

We denote by $\overline{i}$ the vector $(i_1,\dots,i_n)\in\Z^n$. Proposition~\ref{coefficients of DR-polynomials} and equation~(\ref{explicit reformulation}) imply that
\begin{align}\label{main sum}
& \int\limits_{\oM_{g,n+1}}\lambda_g\lambda_{g-1}\psi_0^0\prod_{i=1}^n\psi_i^{d_i}=\\
& \frac{(2g)!}{g!2^g}\sum\limits_{
\begin{smallmatrix}
i_0,\dots,i_n \geq 0 \\
i_0+i_1+\cdots+i_n=g \\
i_j\le d_j,\ j=1,\dots,n \\
I\subset \{1,\dots,n\},\ I\not=\emptyset
\end{smallmatrix}
}\frac{(-1)^{g-i_0}g!}{i_0!\cdots i_n!}
\frac{\prod_{j=1}^{|I|}(2g+j-1)}{\prod_{j\in I}(d_j+i_j)} w_I(\overline{d}-\overline{i}).\notag
\end{align}

This allows us to compute the integral in the right hand side of equation~\eqref{equivalent formulation}. Indeed,
\begin{equation}
\int\limits_{\oM_{g,2}}\lambda_g\lambda_{g-1}\psi_1^g \label{one point}
= \frac{(2g)!}{g!2^g}\sum\limits_{i=0}^{g}(-1)^i\binom{g}{i}\frac{2g}{g+i} 
 = \frac{g!}{2^{g-1}}.
\end{equation}

Equations~\eqref{main sum} and~\eqref{one point} imply that equation~\eqref{equivalent formulation} is
equivalent to
\begin{align}\label{big sum}
& \sum\limits_{
\begin{smallmatrix}
i_0,\dots,i_n \geq 0 \\
i_0+i_1+\cdots+i_n=g \\
i_j\le d_j,\ j=1,\dots,n \\
I\subset \{1,\dots,n\},\ I\not=\emptyset
\end{smallmatrix}
}\frac{(-1)^{g-i_0}}{i_0!\cdots i_n!}
\frac{\prod_{j=1}^{|I|}(2g+j-1)}{\prod_{j\in I}(d_j+i_j)} w_I(\overline{d}-\overline{i}) \\
& =\prod\limits_{i=1}^{n-1}(2g+i-1)\prod\limits_{i=1}^n\frac{(d_i-1)!}{(2d_i-1)!}.\notag
\end{align}

We prove in lemma~\ref{lemma:vanishing} below that for all subsets $I\subset\{1,\dots,n\}$ such that $|I|\leq n-2$
the corresponding summands on the left hand side of this formula vanish. Before that, let us introduce a new definition that would allow us to count the number of paths in the lattice in a convenient way. 

Let $\o c\in\Z^n$. We denote by $w_0(\o c)$ the number of paths $(p_1,\dots,p_N)$ in $\Z^n$, such that $p_1=\o c$ and $p_N=(0,\dots,0)$. Observe that for any non-empty $I\subset \{1,\dots,n\}$,
\begin{align*}
w_I(\overline c) & =\begin{cases}
                   1,&\text{if $\o c=\overline{\mathbf{1}}_I$},\\
                   \sum_{k\in I} w_0(\o c-\overline{\mathbf{1}}_I-\overline{\mathbf{1}}_{\{k\}}),&\text{otherwise}.
\end{cases} \\
w_0(\o c) & =\begin{cases}
             (\sum_{i=1}^nc_i)!/\prod_{j=1}^nc_j!,&\text{if $c_i\ge 0$}\\
             0,&\text{otherwise}.
           \end{cases}
\end{align*}
We also introduce two auxiliary functions. Let 
\begin{align*}
f_{a,b}(x) & :=\sum_{i=0}^a(-1)^i\frac{x^{a-i}}{b+i}\binom{a}{i}, \\ 
g_{a,b}(x) & :=\int x^a (1-x)^b dx. 
\end{align*}
We list some properties of these functions:
\begin{align*}
f_{a,b} & =(-1)^{a+1}x^{a+b}g_{-a-b-1,a}, & \frac{d}{dx}f_{a,b} & =af_{a-1,b},\\
g_{a,b}(1) & =\frac{b!}{(a+1)(a+2)\cdots(a+b+1)}, & f_{a,b}(0) & =\frac{(-1)^a}{a+b}.
\end{align*}

\begin{lemma}\label{lemma:vanishing}
Let $I$ be a subset of $\{1,\dots,n\}$ such that $0<|I|\le n-2$. Then the corresponding summand of the left hand side of equation~\eqref{big sum} is equal to zero, that is,
\[
\sum\limits_{
\begin{smallmatrix}
i_0,\dots,i_n \geq 0 \\
i_0+i_1+\cdots+i_n=g \\
i_j\le d_j,\ j=1,\dots,n 
\end{smallmatrix}
}\frac{(-1)^{g-i_0}}{i_0!\cdots i_n!}
\frac{w_I(\overline{d}-\overline{i})}{\prod_{j\in I}(d_j+i_j)} =0
\]
\end{lemma}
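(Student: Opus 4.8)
The plan is to decouple the multi-index sum into a product of one-variable sums and then recognize the result as the image, under a simple linear functional, of a polynomial that is manifestly divisible by a factor vanishing on the diagonal $t=y$. Throughout write $I^c:=\{1,\dots,n\}\setminus I$, so $|I^c|\ge 2$.

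First I would replace $w_I(\overline d-\overline i)$ by its path-count expansion $\sum_{k\in I}w_0(\overline d-\overline i-\overline{\mathbf 1}_I-\overline{\mathbf 1}_{\{k\}})$. This is legitimate termwise: the only lattice point where this formula disagrees with $w_I$ is $\overline{\mathbf 1}_I$ itself, and a short computation shows that $\overline d-\overline i=\overline{\mathbf 1}_I$ forces $i_0=-(|I^c|-1)<0$, i.e.\ that point lies outside the summation range. To break the coupling between indices I would then use two representations: the integral formula $w_0(\overline m)=\int_0^\infty e^{-t}\prod_l (t^{m_l}/m_l!)\,dt$ (valid precisely on the range where the summand is nonzero, so that all $m_l\ge 0$), and the coefficient extraction $(-1)^s/(g-s)!=[y^g]\,e^y(-y)^s$ with $s=\sum_{j\ge 1}i_j=g-i_0$. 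The latter simultaneously encodes the sign, the factor $1/i_0!$, and the constraint $i_0\ge 0$ (terms with $s>g$ are killed automatically), so after both substitutions the sum factorizes completely inside $[y^g]\int_0^\infty e^{y-t}(\cdots)\,dt$.

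Next I would evaluate the now-independent one-index sums. Each free index $j\in I^c$ contributes $\sum_{i_j}\frac{(-y)^{i_j}}{i_j!}\frac{t^{d_j-i_j}}{(d_j-i_j)!}=\frac{(t-y)^{d_j}}{d_j!}$, so together they give $(t-y)^{D}/\prod_{j\in I^c}d_j!$ with $D:=\sum_{j\in I^c}d_j$. Each index $j\in I$ produces, up to the constant $1/(d_j-1)!$, the polynomial $F_j(y,t):=y^{d_j-1}f_{d_j-1,d_j}(t/y)$, while the distinguished index $k$ produces $\tfrac{1}{(d_k-1)!}\partial_t F_k$, using $f_{a,b}'=a f_{a-1,b}$. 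Summing over $k\in I$ and applying the product rule collapses $\sum_{k\in I}\partial_t F_k\prod_{j\in I\setminus\{k\}}F_j$ into $\partial_t\bigl(\prod_{j\in I}F_j\bigr)$. Writing $P:=\prod_{j\in I}F_j$ and introducing the functional $\mathcal L(y^pt^q):=[y^g]\int_0^\infty e^{y-t}y^pt^q\,dt=q!/(g-p)!$, this reduces the whole sum to
\[
S_I=c_I\cdot \mathcal L\bigl((t-y)^{D}\,\partial_t P\bigr),\qquad c_I:=\tfrac{1}{\prod_{j\in I^c}d_j!\,\prod_{j\in I}(d_j-1)!}\neq 0 .
\]

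Finally comes the punchline, which I expect to be short once the setup is in place. A one-line computation on monomials gives $\mathcal L\bigl((t-y)H_0\bigr)=(\deg H_0+1-g)\,\mathcal L(H_0)$ for homogeneous $H_0$; iterating, $\mathcal L\bigl((t-y)^{D}R\bigr)=\prod_{m=1}^{D}(\deg R+m-g)\cdot\mathcal L(R)$. Here $R=\partial_t P$ is homogeneous, and using $\sum_j d_j=g+n-1$ one finds $\deg R=g+|I^c|-2-D$, so the factors $\deg R+m-g$ run over the $D$ consecutive integers from $|I^c|-1-D$ to $|I^c|-2$. This range contains $0$ precisely because $|I^c|\ge 2$ (so $|I^c|-2\ge 0$) and $D=\sum_{j\in I^c}d_j\ge|I^c|-1$ (so the left endpoint is $\le 0$); hence the product vanishes and $S_I=0$. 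I expect the main obstacle to be not this final step — the vanishing is forced cleanly by the factor $(t-y)^{D}$ — but the bookkeeping of the decoupling: keeping the index ranges so that every use of the $w_0$-integral is valid, justifying the interchange of the finite sums with the $t$-integral and the coefficient extraction, and confirming that the single point where the $w_I$-expansion is inexact is harmlessly suppressed. Once $\mathcal L$ and its scaling law under multiplication by $(t-y)$ are isolated, the argument is uniform in $I$ for every $|I^c|\ge 2$.
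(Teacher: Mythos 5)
Your proof is correct and is essentially the paper's own argument in homogenized form: your functional $\mathcal{L}$, restricted to homogeneous polynomials of the relevant degree, is exactly the paper's operator $\left(\frac{d}{dx}\right)^{n-2-|I|}\big|_{x=1}$ after setting $y=1$, $t=x$, and your observation that $\prod_{m=1}^{D}(\deg R+m-g)$ contains a zero factor is the same vanishing as the paper's remark that $(x-1)$ divides the product to order $\sum_{j\notin I}d_j\ge n-|I|>n-2-|I|$. The only cosmetic differences are the Gamma-integral representation of $w_0$, the coefficient extraction in $y$ (both of which the paper leaves implicit in its ``explicit calculation''), and your product-rule collapse of the sum over $k\in I$ into $\partial_t P$, where the paper instead shows each $k$-term vanishes separately.
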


\begin{proof}
Let $k\in I$. An explicit calculations shows that  
\begin{align*}
&\sum\limits_{
\begin{smallmatrix}
i_0,\dots,i_n \geq 0 \\
i_0+i_1+\cdots+i_n=g \\
i_j\le d_j,\ j=1,\dots,n 
\end{smallmatrix}
}\frac{(-1)^{g-i_0}}{i_0!\cdots i_n!}
\frac{w_0(\overline{d}-\overline{i}-\overline{\mathbf{1}}_I-\overline{\mathbf{1}}_{\{k\}})}{\prod_{j\in I}(d_j+i_j)} = \\
&\left.\left(\frac{d}{dx}\right)^{n-2-|I|}\left(
\frac{f_{d_k-2,d_k}}{(d_k-2)!}
\prod_{{\begin{smallmatrix}j\in I\\ j\not= k \end{smallmatrix}}}
\frac{f_{d_j-1,d_j}}{(d_j-1)!}
\prod_{j\notin I}
\frac{(x-1)^{d_j}}{d_j!}\right)\right|_{x=1}
\end{align*}
The derivative at $x=1$ in the right hand side of this equation is equal to zero. Indeed, $(x-1)$ enters the numerator with the multiplicity
$\sum_{j\not\in I}d_j\ge\sum_{j\not\in I}1=n-|I|>n-2-|I|$.

In order to complete the proof, we just observe that the sum over all $k\in I$ of the left hand side of this formula is exactly the expression in the statment of the lemma.
\end{proof}

This lemma implies that the left hand side of equation~\eqref{big sum} is equal to $S_0+\sum_{l=1}^nS_l$, where
\begin{align*}
S_0& =
\sum\limits_{
\begin{smallmatrix}
i_0,\dots,i_n \geq 0 \\
i_0+i_1+\cdots+i_n=g \\
i_j\le d_j,\ j=1,\dots,n 
\end{smallmatrix}
}\frac{(-1)^{g-i_0}}{i_0!\cdots i_n!}
\frac{\prod_{j=1}^{n}(2g+j-1)}{\prod_{j=1}^n(d_j+i_j)} w_{\{1,\dots,n\}}(\overline{d}-\overline{i})
,\\
S_l & =
\sum\limits_{
\begin{smallmatrix}
i_0,\dots,i_n \geq 0 \\
i_0+i_1+\cdots+i_n=g \\
i_j\le d_j,\ j=1,\dots,n 
\end{smallmatrix}
}\frac{(-1)^{g-i_0}}{i_0!\cdots i_n!}
\frac{\prod_{j=1}^{n-1}(2g+j-1)}{\prod_{j\not=l}(d_j+i_j)} w_{\{1,\dots,n\}\setminus\{l\}}(\overline{d}-\overline{i})
\end{align*}

Recall that $\sum_{i=1}^n d_i=g+n-1$. Using the expression of $w_I$ in terms of $w_0$ in this particular case, we see
that $S_0$ can be represented in the following way: 
\begin{align*}
S_0 & =\prod\limits_{j=1}^n\frac{2g+j-1}{(d_j-1)!}\int_0^1\prod\limits_{j=1}^n f_{d_j-1,d_j}dx \\
& =
\frac{\prod_{j=1}^{n-1}(2g+j-1)}
{\prod_{j=1}^n(d_j-1)!}(-1)^{g+n-1}\int_0^1\prod_{j=1}^n g_{-2d_j,d_j-1}dx^{2g+n-1} \\
& =\prod_{i=1}^{n-1}(2g+i-1)\prod_{i=1}^n\frac{(d_i-1)!}{(2d_i-1)!}\\
&-\frac{\prod_{j=1}^{n-1}(2g+j-1)}{\prod_{j=1}^n(d_j-1)!}
\sum_{l=1}^n
(-1)^{d_l}
\int_0^1(1-x)^{d_l-1}\prod\limits_{j\ne l}f_{d_j-1,d_j}dx.
\end{align*}

In order to complete the proof of equation~\eqref{big sum}, and, therefore, the proof of theorem~\ref{main theorem}, it is sufficient to show that
\[
S_l=\frac{\prod_{j=1}^{n-1}(2g+j-1)}{\prod_{j=1}^n(d_j-1)!}(-1)^{d_l}\int_0^1(1-x)^{d_l-1}\prod_{j\ne l}f_{d_j-1,d_j}dx
\]
for all $l=1,\dots,n$. The right hand side of this formula is equal to
\begin{align*}
& -\frac{\prod_{j=1}^{n-1}(2g+j-1)}{d_l!\prod_{j\ne l}(d_j-1)!}
\int_0^1\prod_{j\ne l}f_{d_j-1,d_j}d(x-1)^{d_l} = \\
& \sum_{k\ne l}\frac{\prod_{j=1}^{n-1}(2g+j-1)}{d_l!(d_k-2)!
\prod_{j\ne l,k}(d_j-1)!}
\int_0^1(x-1)^{d_l}f_{d_k-2,d_k}
\prod_{j\ne l,k}f_{d_j-1,d_j}dx \\
& +\frac{(-1)^g}{d_l!\prod_{j\ne l}(d_l-1)!}\frac{\prod_{j=1}^{n-1}(2g+j-1)}{\prod_{j\ne l}(2d_j-1)}.
\end{align*}
Meanwhile, using the expression of $w_I$ in terms of $w_0$, we see that 
\begin{align*}
S_l & = 
 \sum\limits_{
\begin{smallmatrix}
i_0,\dots,i_n \geq 0 \\
i_0+i_1+\cdots+i_n=g \\
i_j\le d_j,\ j=1,\dots,n 
\end{smallmatrix}
}
\left(
\frac{(-1)^{g-i_0}\prod_{j=1}^{n-1}(2g+j-1)}{\prod_{j=0}^n i_j!\prod_{j\ne l}(d_j+i_j)} \right. \\
& \phantom{
 \sum\limits_{
\begin{smallmatrix}
i_0,\dots,i_n \geq 0 \\
i_0+i_1+\cdots+i_n=g \\
i_j\le d_j,\ j=1,\dots,n 
\end{smallmatrix}
} {\ }}
\left. \cdot \sum_{k\ne l} 
w_0(\overline{d}-\overline{i}-\overline{\mathbf{1}}_{\{1,\dots,n\}\setminus\{l\}}-\overline{\mathbf{1}}_{\{k\}}) \right)  \\
& + \frac{(-1)^g\prod_{j=1}^{n-1}(2g+j-1)}{d_l!\prod_{j\ne l}(d_l-1)!\prod_{j\ne l}(2d_j-1)},
\end{align*}
and an explicit calculations shows that
\begin{align*}
& \sum\limits_{
\begin{smallmatrix}
i_0,\dots,i_n \geq 0 \\
i_0+i_1+\cdots+i_n=g \\
i_j\le d_j,\ j=1,\dots,n 
\end{smallmatrix}
}
\frac{(-1)^{g-i_0}
w_0(\overline{d}-\overline{i}-\overline{\mathbf{1}}_{\{1,\dots,n\}\setminus\{l\}}-\overline{\mathbf{1}}_{\{k\}})}
{\prod_{j=0}^ni_j!\prod_{j\ne l}(d_j+i_j)} \\
& =\frac{\int_0^1(x-1)^{d_l}f_{d_k-2,d_k}\prod_{j\ne l,k}f_{d_j-1,d_j}dx}{d_l!(d_k-2)!\prod_{j\ne l,k}(d_j-1)!}.
\end{align*}
\end{proof}

\subsection{Proofs of lemmas~\ref{lemma:coef1}--~\ref{lemma:coef3}}\label{sec:proofs-lemmas}

\begin{proof}[Proof of lemma~\ref{lemma:coef1}]
Let us reformulate the lemma. We want to prove that the coefficient of the monomial $a_1^{p_1}\ldots a_m^{p_m}$
in $\left\langle\prod_{i=1}^{m-1}\genfrac[]{0pt}{}{a_i}{1}\genfrac[]{0pt}{}{a_m}{0}\right\rangle^{DR}_g$
is equal to the coefficient of the same monomial in 
\begin{equation}
\frac{2g}{2g+m-1}\frac{(a_1+\ldots+a_m)^{2g+m-1}}{a_1\ldots a_{m-1}}.\label{ratfunc}
\end{equation}

We prove it by induction on $m$. The base of induction, $m=1$, is obvious.  Let $m\ge 2$. The induction assumption and the recursion relation 
in proposition~\ref{prop:recursion}
implies that the coefficient of the monomial $a_1^{p_1}\cdots a_m^{p_m}$
in $\left\langle\prod_{i=1}^{m-1}\genfrac[]{0pt}{}{a_i}{1}\genfrac[]{0pt}{}{a_m}{0}\right\rangle^{DR}_g$
is equal to the coefficient of the same monomial in
\begin{align*}
& \sum_{I\subset\{2,\ldots,m-1\}}\left (\frac{2g+m-|I|-2}{2g+m-1}\cdot \frac{a_1+a_m+\sum_{i\in I}a_i}{a_1}\cdot \right. \\
& \phantom{ \sum_{I\subset\{2,\ldots,m-1\}} } \left. \frac{2g}{2g+m-|I|-2}\cdot 
\frac{\left(\sum_{i=1}^m a_i\right)^{2g+m-|I|-2}}{\prod_{i\in\{2,\ldots,m-1\}\backslash I}a_i}\cdot |I|!\right)=\\
& =\frac{2g}{2g+m-1}\cdot \frac{1}{\prod_{i=1}^{m-1}a_i}\cdot \\
& \phantom{ = }
\sum_{I\subset\{2,\ldots,m-1\}}|I|!\left(a_1+a_m+\sum\limits_{i\in I}a_i\right)\left(\prod_{i\in I}a_i\right)\left(\sum_{i=1}^m a_i\right)^{2g+m-|I|-2}.
\end{align*}
The right hand side of this equation coincides with~\eqref{ratfunc} by the following combinatorial observation:
\[
\sum_{I\subset\{2,\ldots,k\}}|I|!\left(x_1+\sum_{i\in I}x_i\right)\left(\prod_{i\in I}x_i\right)\left(\sum_{i=1}^k x_i\right)^{k-|I|-1}=\left(\sum_{i=1}^k x_i\right)^k
\]
(we assume $k\geq 1$).
\end{proof}

\begin{proof}[Proof of Lemma \ref{lemma:coef2}]
First, let us introduce some notations. Let $P$ and $Q$ be polynomials in the variables $a_1,\dots,a_m$. Let $I\subset\{1,\dots,m\}$. We write $P\stackrel{a_I}{\equiv}Q$ iff the polynomial $P-Q$ doesn't have monomials divisible by $\prod_{i\in I}a_i$.
Let $J\subset\{1,\dots,m\}$. We will write $P(a_J)$ in order to specify that the polynomial $P$ depends only on variables $a_i$ for $i\in J$.
 
The lemma is equivalent to the following statement. If $c_1,\dots,c_m, c_1+\ldots+c_m=m-1$ are non-negative integers and
$E=\{i\in\{1,\dots,m\}|c_i=0\}$, then there exists a polynomial $P(a_{\{1,\dots,m\}\setminus E},x)$ such that
\begin{equation}\label{formula:empty points}
\left\langle\prod_{i=1}^m\genfrac[]{0pt}{}{a_i}{c_i}\right\rangle^{DR}_g\stackrel{a_E}{\equiv}P\left(a_{\{1,\dots,m\}\setminus E},\sum_{i\in E}a_i\right).
\end{equation} 
We prove this by induction on $m$. The base of induction $m=1$ is obvious. Let $m\ge 2$. If $|E|=1$ then there is nothing to prove. Suppose that $|E|\ge 2$. Then there exists $i\in I_m$ such that 
$c_i\ge 2$. Without loss of generality we can assume that $i=m$. We are going to apply the recursion relation in proposition~\ref{prop:recursion}.

Let $m=n+1$, $c_m=d+1$, $a_m=a$, $d_j=c_j$, $1\le j\le n$. There are four summands in the right hand side of the recursion relation. Let us denote 
them by $S_1$, $S_2$, $S_3$, $S_4$ in the same order as they are listed in proposition~\ref{prop:recursion}. We prove
that $S_i\stackrel{a_E}{\equiv}P_i(a,a_{\{1,\dots,n\}\setminus E},\sum_{j\in E}a_j)$ for some $P_i$ separately for each $i$.
It is easy to see that $S_1\stackrel{a_E}{\equiv}S_2\stackrel{a_E}{\equiv}0$. 

Let us discuss $S_3$. It can be represented as a sum
$$
S_3=\sum_{J_1\subset \{1,\dots,n\}\setminus E}\sum_{k\leq |E|} S_{J_1,k},
$$
 where
\begin{align*}
& S_{J_1,k}=
\sum_{\substack{J_2\subset E\\|J_2|=k}}\left(a+\sum_{j\in J_1\sqcup J_2}a_j\right)\cdot\left(2g+|\{1,\dots,n\}\setminus(J_1\sqcup J_2)|\right)\cdot \\
& \left\langle
\genfrac[]{0pt}{}{a+\sum_{j\in J_1\sqcup J_2}a_j}{0}\prod_{i\in \{1,\dots,n\}\setminus(J_1\sqcup J_2)} \genfrac[]{0pt}{}{ a_i}{d_i}
\right\rangle^{DR}_g
\left\langle
\genfrac[]{0pt}{}{ a}{d} \prod_{j\in J_1\sqcup J_2} \genfrac[]{0pt}{}{ a_j}{d_j}
\right\rangle^{DR}_0 
\end{align*}

Let us prove that $S_{J_1,k}\stackrel{a_E}{\equiv}P_{J_1,k}(a,a_{\{1,\dots,n\}\setminus E},\sum_{j\in E}a_j)$ for some $P_{J_1,k}$. 
Note that $\left\langle\genfrac[]{0pt}{}{ a}{d} \prod_{j\in J_1\sqcup J_2} \genfrac[]{0pt}{}{ a_j}{d_j}\right\rangle^{DR}_0$ and $\left(2g+|\{1,\dots,n\}\setminus(J_1\sqcup J_2)|\right)$ are just some constants that depend only on the subset $J_1$ and the number $k=|J_2|$. 
The induction assumption implies that
\begin{align*}
& \left\langle\genfrac[]{0pt}{}{b}{0}\prod_{i=1}^{|E|-k}\genfrac[]{0pt}{}{x_i}{0}\prod_{i\in \{1,\ldots,n\}\backslash(E\sqcup J_1)}\genfrac[]{0pt}{}{ a_i}{d_i}\right\rangle^{DR}_g\\
& \stackrel{b,x}{\equiv}Q_{J_1,k}\left(a_{\{1,\ldots,n\}\backslash(E\sqcup J_1)},b+\sum_{i=1}^{|E|-k}x_i\right)
\end{align*} 
for some polynomial $Q_{J_1,k}$. Hence
\begin{align*}
&\left(a+\sum_{j\in J_1\sqcup J_2}a_j\right)\left\langle\genfrac[]{0pt}{}{a+\sum_{j\in J_1\sqcup J_2}a_j}{0}\prod_{i\in\{1,\ldots,n\}\backslash(J_1\sqcup J_2)}\genfrac[]{0pt}{}{a_i}{d_i}\right\rangle^{DR}_g\\
&\stackrel{a_E}{\equiv}\left(a+\sum_{j\in J_1\sqcup J_2}a_j\right)Q_{J_1,k}\left(a_{\{1,\ldots,n\}\backslash(E\sqcup J_1)},a+\sum_{i\in J_1\sqcup E}a_i\right).
\end{align*}
Notice that
$$
\sum_{\substack{J_2\subset E\\|J_2|=k}}\left(a+\sum_{j\in J_1\sqcup J_2}a_j\right)=
\binom{|E|}{k}\left(a+\sum_{j\in J_1}a_j\right)+\binom{|E|-1}{k-1}\sum_{j\in E}a_j.
$$
Therefore, 
\begin{align*}
Q_{J_1,k}(a_{\{1,\ldots,n\}\backslash(E\sqcup J_1)},a+\sum_{i\in J_1\sqcup E}a_i)\cdot\sum_{\substack{J_2\subset E\\|J_2|=k}}\left(a+\sum_{j\in J_1\sqcup J_2}a_j\right)
\end{align*}
can be represented as a polynomial that dependes only on $a$, $a_i$, $i\in \{1,\dots,n\}\setminus E$, and $\sum_{j\in E}a_j$.
The same argument can be applied to $S_4$. This concludes the proof of the lemma.
\end{proof}

\begin{proof}[Proof of lemma~\ref{lemma:coef3}]
The lemma follows immediately from proposition~\ref{prop:divisibility}. Indeed, the statement of the lemma is equivalent to the fact 
that the polynomial~\eqref{eq:divisibility} doesn't have terms linear in the variable $b$.  
\end{proof}


\end{document}